\documentclass[12pt]{article}

\usepackage[margin=1.20in]{geometry}
\usepackage{sectsty}
\usepackage{xcolor}
\usepackage{amssymb,amsmath}
\usepackage{amsthm}
\usepackage{hyperref}
\usepackage{mathrsfs}
\usepackage{textcomp}
\usepackage{enumitem}
\usepackage[abbrev,nobysame,non-compressed-cites]{amsrefs}
\usepackage{todonotes}

\hypersetup{
  colorlinks,
  citecolor=black,
  filecolor=black,
  linkcolor=black,
  urlcolor=black
}

\newtheorem{Theorem}{Theorem}[section]
\newtheorem{Lemma}[Theorem]{Lemma}
\newtheorem{Proposition}[Theorem]{Proposition}

\newtheorem{Example}[Theorem]{Example}
\newtheorem{Corollary}[Theorem]{Corollary}
\newtheorem{Remark}[Theorem]{Remark}

\newcommand{\R}{\mathbb{R}}

\begin{document}

\title{Quantitative homogenization of Monge--Amp\`ere equations in periodic media}

\author{
Tianling Jin\footnote{T. Jin was partially supported by NSFC grant
12122120, and Hong Kong RGC grants GRF 16304125, GRF 16306320 and
GRF 16303822.}, \quad
YanYan Li\footnote{Y.Y. Li is partially supported by NSF grants
DMS-2247410 and DMS-2554267.}, \quad
Hung V. Tran\footnote{H.V. Tran is partially supported by NSF grant
DMS-2348305.}, \quad
Xushan Tu
}

\date{\today}

\maketitle

\begin{abstract}
\setlength{\emergencystretch}{1em}
Let $u^\varepsilon$ and $u$ be the convex solutions of
\[
\det D^2u^\varepsilon=F(x,x/\varepsilon),
\qquad
\det D^2u=\overline F(x)
\]
on a bounded convex domain $\Omega$, with the same Dirichlet data.
Here, $F$ is uniformly positive and periodic in its second variable, and
$\overline F(x)=\int_{\mathbb T^n}F(x,y)\,dy$.
For $m=0,1$ and $0<\alpha\leq1$, we prove
\[
\|u^\varepsilon-u\|_{L^\infty(\Omega)}
\leq C\varepsilon^{m+\alpha},\qquad \forall\,0<\varepsilon\leq1,
\]
provided that $u\in C^{m+2,\alpha}(\overline\Omega)$ and
$F\in C_x^{m,\alpha}
(\overline\Omega;C_y^{0,\gamma}(\mathbb T^n))$ for some $0<\gamma<1$.
Both exponents are optimal in their respective regularity scales.
We also establish the $O(\varepsilon^{\alpha})$ rate for locally weighted periodic Borel
measures, with constants independent of the distribution of the
microscopic measure.

\medskip
\noindent{\it Keywords}:  Monge--Amp\`ere equations, periodic homogenization, convergence rates

\medskip

\noindent {\it MSC (2010)}: Primary 35B27; Secondary 35J96, 35B40.

\end{abstract}

\section{Introduction}

In this paper, we study quantitative homogenization for the Monge--Amp\`ere
equation with locally weighted periodic measures on a bounded convex
domain $\Omega\subset\R^n$. Let
$\mathbb T^n:=\mathbb R^n/\mathbb Z^n$ and let
$\varphi\in C(\partial\Omega)$.

Let $\mu$ be a nonnegative locally finite $\mathbb Z^n$-periodic Borel
measure normalized by
\[
\mu(\mathbb T^n)=1.
\]
Define
\[
\mu_\varepsilon(E):=\varepsilon^n\mu(E/\varepsilon)
\]
for Borel sets $E\subset\mathbb R^n$. Let
$F:\overline\Omega\times\mathbb T^n\to(0,\infty)$ be jointly Borel and
periodic in its second variable, and set
\[
\overline F(x):=\int_{\mathbb T^n}F(x,y)\,d\mu(y).
\]
Let $u^\varepsilon$ be the convex solution of the oscillatory problem:
\begin{equation}
\label{eq:oscillatory-problem}
\det D^2u^\varepsilon=F(x,x/\varepsilon)\,\mu_\varepsilon
\quad\text{in }\Omega,
\qquad u^\varepsilon=\varphi \quad\text{on }\partial\Omega.
\end{equation}
By the Alexandrov maximum principle and convexity, every subsequence of
$u^\varepsilon$ admits a subsequence converging locally uniformly to a convex function $u$,
and consequently the corresponding Monge--Amp\`ere measures converge
weakly to that of $u$. Formally, $u$ solves the effective problem
\begin{equation}
\label{eq:effective-problem}
\det D^2u=\overline F(x) \quad\text{in }\Omega,
\qquad
u=\varphi \quad\text{on }\partial\Omega.
\end{equation}
By the uniqueness of solutions to \eqref{eq:effective-problem}, the whole family $u^\varepsilon$ converges uniformly to $u$ as
$\varepsilon\to0$.
Our goal is to quantify the  convergence rate of $u^\varepsilon$ to $u$
in terms of the regularity of this homogenized solution. We obtain the rates
$O(\varepsilon^\alpha)$ and $O(\varepsilon^{1+\alpha})$ for positive
H\"older densities, and the rate $O(\varepsilon^\alpha)$ for general periodic Borel measures,
with constants independent of the distribution of $\mu$.

Since the literature is vast, we only focus on periodic homogenization of nondivergence form and fully nonlinear uniformly elliptic equations here.
For periodic linear elliptic equations in nondivergence form, Avellaneda and Lin \cite{avellaneda1989compactness} established regularity estimates uniform in the microscopic period. 
Evans \cite{evans1992periodic} proved periodic homogenization for fully nonlinear elliptic equations using the perturbed test function method. Caffarelli and Souganidis \cite{caffarelli2010souganidis} obtained quantitative homogenization rates under mixing assumptions, with algebraic rates in the periodic setting governed by a generally nonexplicit exponent. They also obtained quantitative rates for monotone finite-difference approximations of fully nonlinear uniformly elliptic equations in \cite{caffarelli2008souganidis}.
Under higher-order smoothness assumptions, Kim and Lee \cite{kim2016higher} constructed higher-order expansions for periodic linear and fully nonlinear uniformly elliptic equations using interior and boundary layer correctors.
For general nondivergence form uniformly elliptic equations, Guo, Tran, and Yu \cite{GTY2020} proved that $O(\varepsilon)$ is the optimal rate.
When the diffusion matrix has only one degree of freedom, the rate is improved to $O(\varepsilon^2)$ by Guo, Sprekeler, and Tran.

This uniformly elliptic framework does not apply directly to the
Monge--Amp\`ere equation, which is not uniformly elliptic in the class of
convex solutions. Quantitative estimates must instead exploit the geometry
of convex solutions and their sections.
For positive continuous periodic densities, Caffarelli and Li \cite{caffarelli2004liouville} 
proved an algebraic homogenization estimate. The corresponding large-section
estimate for normalized nonnegative locally finite periodic Borel measures
was obtained in \cite[Theorem~2.3]{jin2025liouville}. In a related discrete
setting,
Chen, Huang, and Wang \cite{chen2019convergence} established convergence rates for polyhedral
approximations of Alexandrov solutions, including the rates $\delta^\alpha$,
$\delta^{1+\alpha}$, and $\delta^2$ in terms of the mesh size $\delta$, under
successively stronger regularity assumptions. Although the approximation mechanism there is
different, the same hierarchy of rates appears below for oscillatory
periodic data.

We first state the two rates for positive H\"older densities.

\begin{Theorem}
\label{thm:main-result}
Let $\mu=dy$, $m\in\{0,1\}$, $\alpha\in(0,1]$, and $\gamma\in(0,1)$. 
Assume that $F:\overline\Omega\times\mathbb T^n\to(0,\infty)$ satisfies
$0<\lambda\leq F\leq\Lambda$.
Let $u^\varepsilon$ and $u$ be the solutions of \eqref{eq:oscillatory-problem} and \eqref{eq:effective-problem}, respectively. 
If $F\in C_x^{m,\alpha}
(\overline\Omega;C_y^{0,\gamma}(\mathbb T^n))$ and
$u\in C^{m+2,\alpha}(\overline\Omega)$, then
\[
\|u^\varepsilon-u\|_{L^\infty(\Omega)}
\leq C\varepsilon^{m+\alpha},\qquad \forall\,0<\varepsilon\leq1.
\]
The constant $C$ depends only on $n$, $\alpha$, $\gamma$, $\lambda$,
$\Lambda$, the inradius of $\Omega$, $\operatorname{diam}\Omega$,
and the indicated norms of $F$ and $u$.
\end{Theorem}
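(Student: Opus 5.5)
The plan is a two-scale ansatz with periodic cell correctors, followed by comparison. For each $x\in\overline\Omega$ set $A(x):=D^2u(x)$. Since $u$ is convex, $\det A=\overline F$ is bounded between $\lambda$ and $\Lambda$, and $u\in C^{2,\alpha}$, the matrices $A(x)$ are uniformly positive definite. I would solve the cell problem
\[
\det\bigl(A(x)+D_y^2w(x,y)\bigr)=F(x,y)\ \text{ in }\mathbb T^n,\qquad w(x,\cdot)\ \mathbb Z^n\text{-periodic},\quad \int_{\mathbb T^n}w(x,y)\,dy=0 .
\]
The compatibility condition $\int_{\mathbb T^n}F(x,y)\,dy=\det A(x)$ holds by the effective equation, so a periodic solution exists (the Caffarelli--Li/Li periodic theory). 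Caffarelli's interior $C^{2,\gamma}$ estimates, applied on the torus with $F(x,\cdot)\in C^{0,\gamma}$, give $\|w(x,\cdot)\|_{C^{2,\gamma}(\mathbb T^n)}\le C$ uniformly in $x$. A stability argument then transfers regularity in $x$: subtract the equations at $x$ and $x'$ to get a linear uniformly elliptic equation in divergence form, since the rows of the cofactor matrix are divergence free. Schauder theory on the torus, together with $A\in C^{m,\alpha}$ and $F\in C_x^{m,\alpha}(C_y^{0,\gamma})$, should give $w\in C_x^{m,\alpha}(C_y^{2,\gamma'})$.

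\textbf{Case $m=0$.} Mollify $w$ in $x$ at scale $\varepsilon$ to get $w_\varepsilon$, with $|w_\varepsilon-w|\lesssim\varepsilon^\alpha$ in $C^2_y$, $|D_xw_\varepsilon|_{C^1_y}\lesssim\varepsilon^{\alpha-1}$ and $|D_x^2w_\varepsilon|\lesssim\varepsilon^{\alpha-2}$. Put $v^\varepsilon(x)=u(x)+\varepsilon^2w_\varepsilon(x,x/\varepsilon)$. Then
\[
D^2v^\varepsilon=A(x)+D_y^2w_\varepsilon+2\varepsilon D^2_{xy}w_\varepsilon+\varepsilon^2D_x^2w_\varepsilon ,
\]
and every term beyond $A+D_y^2w$ is $O(\varepsilon^\alpha)$. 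Hence $v^\varepsilon$ is uniformly convex and $\det D^2v^\varepsilon=F(x,x/\varepsilon)\,(1+O(\varepsilon^\alpha))$. Next set
\[
v^\pm=v^\varepsilon\pm K\varepsilon^\alpha\bigl(|x-x_0|^2-d^2\bigr),\qquad d=\operatorname{diam}\Omega ,
\]
for a large constant $K$. This makes $v^+$ a subsolution and $v^-$ a supersolution: the perturbation shifts the determinant by a definite multiple of $\varepsilon^\alpha$ because $D^2v^\varepsilon$ is bounded above and below. On $\partial\Omega$ the boundary mismatch $|v^\varepsilon-\varphi|\le C\varepsilon^2$ is absorbed by a further constant shift. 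The Alexandrov comparison principle then gives $|u^\varepsilon-v^\varepsilon|\le C\varepsilon^\alpha$, and since $|v^\varepsilon-u|\le C\varepsilon^2$, this yields the claim.

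\textbf{Case $m=1$.} Now $w\in C^{1,\alpha}$ in $x$, so the mixed term $2\varepsilon D^2_{xy}w$ is only $O(\varepsilon)$ and must be cancelled by a second corrector. Let $M=A+D_y^2w$ and $L=\operatorname{cof}(M):D_y^2$. I would solve
\[
L w_1=-2\operatorname{cof}(M):D^2_{xy}w\quad\text{on }\mathbb T^n .
\]
Because $\operatorname{cof}(M)$ is divergence free in $y$, the operator $L$ is in divergence form and its invariant measure is $dy$. The solvability condition is that the right-hand side has zero mean. This holds because $\operatorname{cof}(M):D_y(\nabla_xw)=\operatorname{div}_y\bigl(\operatorname{cof}(M)\,D_y\nabla_xw\bigr)$, which integrates to zero over the torus. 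Then $w_1\in C_x^{0,\alpha}(C^{2,\gamma'}_y)$. Take
\[
v^\varepsilon=u+\varepsilon^2w_\varepsilon+\varepsilon^3w_{1,\varepsilon},
\]
with both correctors mollified at scale $\varepsilon$. Expanding $\det$ to first order, the $O(\varepsilon)$ term cancels. The remainders are $\varepsilon\cdot O(\varepsilon^\alpha)$ from the Hölder modulus of $D_xw$ and its mollification, $\varepsilon^2D_x^2w_\varepsilon=O(\varepsilon^{1+\alpha})$, the quadratic terms $O(\varepsilon^2)$, and the $w_1$ terms $O(\varepsilon^{1+\alpha})$. Thus $\det D^2v^\varepsilon=F\,(1+O(\varepsilon^{1+\alpha}))$, and the same barrier-and-comparison step finishes the proof.

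\textbf{Main obstacle.} I expect the hardest step to be the uniform-in-$x$ regularity of the cell correctors: the $C^{m,\alpha}$ dependence on $(A(x),F(x,\cdot))$ of solutions of the periodic Monge--Ampère equation, and then of $w_1$. I would first try the divergence-form linearization argument with Schauder estimates on $\mathbb T^n$, using the uniform $C^{2,\gamma}$ bound to keep the linearized coefficients uniformly elliptic and Hölder.
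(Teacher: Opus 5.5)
Your proposal is correct. It shares the paper's architecture: an $\varepsilon^2$ cell corrector, an $\varepsilon^3$ second corrector whose solvability and the vanishing of the $O(\varepsilon)$ term both come from the divergence-free cofactor, mollification at scale $\varepsilon$, and a quadratic barrier with comparison. The difference is that you reverse the order of solving and mollifying.

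The paper extends and mollifies the parameter field $Z=(D^2u,F/\overline F)$, keeping $\rho-1$ mean-zero so the cell problem stays solvable near $\partial\Omega$, and evaluates the corrector map at $Z_\varepsilon(x)$. The correctors then solve the cell equation exactly, the cancellation $(W^\varepsilon)^{ij}(N_\varepsilon)_{ij}=0$ is exact, and all $x$-derivative bounds follow from the chain rule. The price is Lemma~\ref{lem:smooth-cell-dependence} ($C^3$ Fr\'echet dependence via the implicit function theorem) and differentiating the $\psi^\varepsilon$-equation twice in $x$.

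You solve at the true parameters and mollify the solutions, so you only need $w\in C^{m,\alpha}_x(C^{2,\gamma}_y)$ and $w_1\in C^{0,\alpha}_x(C^{2,\gamma}_y)$. For $m=1$ this means showing that the difference quotients of $w$ converge to the solution of the linearized cell equation, which is routine. The derivative bounds on $w_\varepsilon$ and $w_{1,\varepsilon}$ then come for free. Extension past $\partial\Omega$, which you need but do not mention, can be done on $w$ and $w_1$ directly by a Whitney extension, with no compatibility constraint.

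The cost is that the cell equation and the cancellation now hold only approximately. The first-order mismatch is $\varepsilon\cdot O(\varepsilon^\alpha)$, as you note. However, your $m=1$ remainder list omits the zeroth-order error $\det(A+D_y^2w_\varepsilon)-F$. This is $O(\varepsilon^{1+\alpha})$ only because an even mollifier kills the linear Taylor term, giving $w_\varepsilon-w=O(\varepsilon^{1+\alpha})$ in $C^2_y$; a non-symmetric kernel gives just $O(\varepsilon)$. State this explicitly, as the paper does for $Z_\varepsilon-Z$.

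Finally, your lower barrier $v^-$ stays convex only for $\varepsilon$ small. The range $\varepsilon\geq\varepsilon_0$ is covered by the Alexandrov bound on $u^\varepsilon$.
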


\begin{Remark}
For $0<\alpha<1$ and a uniformly convex domain $\Omega$, the regularity
assumptions on $u$ in Theorem~\ref{thm:main-result} can be obtained from
the boundary regularity theory for \eqref{eq:effective-problem}.
For $m=0$, the global Schauder estimates of Trudinger and Wang
\cite{trudinger2008boundary} apply when $\partial\Omega\in C^3$ and
$\varphi\in C^3(\overline\Omega)$.
For $m=1$, assuming $\partial\Omega\in C^{3,\alpha}$ and
$\varphi\in C^{3,\alpha}(\overline\Omega)$, Schauder bootstrapping gives
$u\in C^{3,\alpha}(\overline\Omega)$; see
\cite{caffarelli1984dirichlet}, \cite[Chapter~17]{gilbarg2001elliptic}, and \cite[Chapter~4]{Le2024}. See also Savin
\cite{savin2013pointwise} for the pointwise boundary theory. Under these
sufficient assumptions, the constant $C$ in Theorem 1.1 depends only on
$n, \alpha, \gamma, \lambda, \Lambda$, the lower and upper bounds of the
curvatures of $\partial\Omega$, the indicated norm of $F$, and the indicated
norms of $\partial\Omega$ and $\varphi$ together with the moduli of continuity
of their third derivatives.
\end{Remark}

\begin{Remark}
\label{rem:main-sharpness}
The quadratic endpoint is sharp in one dimension: for
$\Omega=(0,1)$, $\varphi=0$,
$F(x,y)=1+\frac12\cos(2\pi y)$, and $\varepsilon=N^{-1}$ with
$N\in\mathbb N$, direct integration yields
\[
\|u^\varepsilon-u\|_{L^\infty(0,1)}
=\frac{1}{4\pi^2}\varepsilon^2.
\]
The exponents in Theorem~\ref{thm:main-result} are optimal in the
corresponding regularity scales, already in one dimension; see
Example~\ref{ex:fixed-data-sharpness}.
\end{Remark}

\begin{Remark}
For $m=0,1$, the assumption on $F$ in
Theorem~\ref{thm:main-result} is satisfied under the joint regularity
assumption
\[
F\in C^{m+\alpha+\gamma}(\overline\Omega\times\mathbb T^n),
\]
with the same $\gamma\in(0,1)$ as in the theorem. Here,
we write $C^s=C^{k,\beta}$ for nonintegral $s=k+\beta$, where
$k\in\mathbb N_0$ and $\beta\in(0,1)$.
\end{Remark}

We next extend the case $m=0$ of Theorem~\ref{thm:main-result} to
general periodic Borel measures $\mu$, allowing degeneracy and
singularities in the microscopic measure.

\begin{Theorem}
\label{thm:weighted-measure-holder-rate}
Let $\alpha\in(0,1]$ and $F\in C(\overline\Omega\times\mathbb T^n)$.
Assume that there are constants
$0<\lambda\leq\Lambda$ and $L\geq0$ such that
\[
\lambda\leq F(x,y)\leq\Lambda,
\qquad
|F(x_1,y)-F(x_2,y)|\leq L|x_1-x_2|^\alpha
\]
for all $x,x_1,x_2\in\overline\Omega$ and $y\in\mathbb T^n$.
Let $u^\varepsilon$ and $u$ be the solutions of \eqref{eq:oscillatory-problem} and \eqref{eq:effective-problem}, respectively. 
If $u\in C^{2,\alpha}(\overline\Omega)$, then
\begin{equation}
\label{eq:weighted-measure-holder-rate}
\|u^\varepsilon-u\|_{L^\infty(\Omega)}
\leq C\varepsilon^\alpha,\qquad \forall\,0<\varepsilon\leq1.
\end{equation}
The constant $C$ depends only on $n$, $\alpha$, $\lambda$, $\Lambda$,
$L$, the inradius of $\Omega$, $\operatorname{diam}\Omega$,
and $\|u\|_{C^{2,\alpha}(\overline\Omega)}$.
\end{Theorem}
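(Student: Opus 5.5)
We would prove Theorem~\ref{thm:weighted-measure-holder-rate} by a comparison argument at the mesoscopic scale $\varepsilon$. Since the microscopic measure $\mu$ may be singular, no pointwise corrector is available. Instead, we compare Monge--Amp\`ere measures on cells $\varepsilon(z+[0,1)^n)$, $z\in\mathbb Z^n$, and convert the comparison into an $L^\infty$ bound through a discrete, Alexandrov-type maximum principle. The key observation is that on each cell $Q$ of size $\varepsilon$, the measures $F(x,x/\varepsilon)\mu_\varepsilon$ and $\overline F(x)\,dx$ have masses that agree up to $C L\varepsilon^{\alpha}|Q|$. Indeed, freezing $x$ at the cell center $x_Q$ gives
\[
\int_Q F(x_Q,x/\varepsilon)\,d\mu_\varepsilon=\varepsilon^n\int_{\mathbb T^n}F(x_Q,y)\,d\mu(y)=|Q|\,\overline F(x_Q),
\]
and the H\"older assumption in $x$, together with $\mu_\varepsilon(Q)=|Q|$, controls the remaining error. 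Importantly, this bound uses only $\mu(\mathbb T^n)=1$, which is why $C$ does not depend on the distribution of $\mu$.

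\emph{Step 1 (barriers).} We construct upper and lower barriers $w^\pm=u\pm A\varepsilon^{\alpha}(q-\sup_\Omega q)$ or similar, with $q$ a strictly convex quadratic. The perturbation is designed so that $\det D^2 w^\pm$ exceeds, respectively falls below, $\overline F$ by a margin $c\varepsilon^\alpha$ uniformly; this uses $u\in C^{2,\alpha}$, so that $D^2u$ is bounded above and below, and $\lambda\le\overline F\le\Lambda$. Boundary data must also be adjusted: we would take $w^-\le\varphi\le w^+$ on $\partial\Omega$, which costs only $O(\varepsilon^\alpha)$ in $L^\infty$.

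\emph{Step 2 (cellwise comparison).} This is the main obstacle. The Alexandrov comparison principle needs $\det D^2 w^-\ge\det D^2u^\varepsilon$ as measures on open sets, but we only have mass inequalities on whole cells. The Monge--Amp\`ere measure of $u^\varepsilon$ can concentrate anywhere inside a cell. To handle this, we would argue by contradiction at the point where $u^\varepsilon-w^+$ attains its minimum, with a margin $M\varepsilon^\alpha$. Consider the section $S=\{u^\varepsilon<w^+ - \delta\}$, for a suitable $\delta$, near that minimum. Its size is at least of order $(M\varepsilon^\alpha)^{1/2}$ in each direction, since $D^2w^+$ is bounded, and because $\alpha\le1$ this is $\gg\varepsilon$ when $M$ is large. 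Hence $S$ is well approximated by a union of cells, and its boundary layer has measure $O(\varepsilon|S|/\operatorname{diam}S)$. The gradient-image inclusion $\partial w^+(S)\subset\partial u^\varepsilon(S)$, valid up to boundary effects, then gives
\[
\int_S\det D^2w^+\le \int_{S_\varepsilon} F(x,x/\varepsilon)\,d\mu_\varepsilon,
\]
where $S_\varepsilon$ is the union of cells meeting $S$. Combining this with the cell-mass identity and the margin $c\varepsilon^\alpha|S|$ from Step~1 yields a contradiction once the boundary-layer error is absorbed. This is where the local version of the large-section estimate in \cite[Theorem~2.3]{jin2025liouville} would be invoked: it gives, in a section of height $h$, the bound $|u^\varepsilon-v|\le C\varepsilon h^{1/2}$-type control against the solution $v$ with averaged right-hand side, uniformly in $\mu$.

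\emph{Step 3 (rescaling and balancing).} Rather than working globally, it may be cleaner to cover $\Omega$ by sections of $u$ of height $h$. On each such section we apply the rescaled large-section estimate, comparing $u^\varepsilon$ with the solution having frozen density $\overline F$, which gives an error $\varepsilon h^{1/2}$. The freezing error $L h^{\alpha/2}\cdot h$ is handled by the $C^{2,\alpha}$ Taylor expansion of $u$. Choosing $h\sim\varepsilon^{2}$ up to constants balances these to $O(\varepsilon^{\alpha})$ after the maximum principle. Near $\partial\Omega$, where sections degenerate, we would use the global $C^{2,\alpha}$ regularity and the barrier of Step~1 on a boundary strip of width $O(\varepsilon^\alpha)$. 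I expect this boundary strip, together with the uniform-in-$\mu$ section estimate, to be the delicate part.
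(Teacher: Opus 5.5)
Your proposal has a genuine gap, and it is in Step~2; Step~3 does not repair it. The Alexandrov comparison principle needs $\det D^2w^+\geq\det D^2u^\varepsilon$ as measures on the set on which you compare. Your cell identity controls $F(x,x/\varepsilon)\mu_\varepsilon$ only on unions of whole $\varepsilon$-cells.

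The size bound $(M\varepsilon^\alpha)^{1/2}$ for $S=\{u^\varepsilon<w^+-\delta\}$ does not follow from $D^2w^+\leq CI$. That bound only makes $w^+-u^\varepsilon$ semiconcave. What you need is a quadratic upper bound for $u^\varepsilon$ at the extremum on mesoscopic scales, and you do not supply one. Moreover, $S$ need not be convex, and even for convex $S$ the boundary layer is governed by the inradius $r$, not by $\operatorname{diam}S$.

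Even granting a convex $S$ with $r\sim(M\varepsilon^\alpha)^{1/2}$, the argument cannot close for $\alpha$ near $1$. The $F\mu_\varepsilon$-mass of the cells meeting $\partial S$ may lie entirely inside $S$. The error is then of order $\Lambda\varepsilon\,\mathcal H^{n-1}(\partial S)\gtrsim\Lambda(\varepsilon/r)|S|$, against a gain of only $cA\varepsilon^\alpha|S|$. Absorbing it requires $r\gtrsim\varepsilon^{1-\alpha}/A$, i.e.\ $M\gtrsim\varepsilon^{2-3\alpha}/A^2$, which is unbounded as $\varepsilon\to0$ whenever $\alpha>2/3$. For $\alpha=1$, $S$ would have to be macroscopic.

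Step~3 has three further problems:
\begin{itemize}
\item The large-section estimate you invoke is an algebraic bound in the ratio of section size to period, of the same type as the factor $\kappa^{-\alpha_0/4}$ in Proposition~\ref{prop:jltt-paired-holder}. It is not an $\varepsilon h^{1/2}$ bound.
\item That estimate requires sections containing many periods, whereas $h\sim\varepsilon^2$ gives sections of diameter $\sim\varepsilon$.
\item Comparisons on individual sections carry the boundary values of $u^\varepsilon$, not of $u$. You give no mechanism for turning local errors of size $\varepsilon^2$ and $\varepsilon^{2+\alpha}$ into a global $\varepsilon^\alpha$.
\end{itemize}

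The missing idea is the periodic corrector. The measures $\det D^2u=\overline F\,dx$ and $F(x,x/\varepsilon)\mu_\varepsilon$ agree only after cell averaging, so the paper never compares $u^\varepsilon$ with a perturbation of $u$ alone. It uses $V_F^\varepsilon=u+\varepsilon^2v_{A(x),\rho_x\mu}(x/\varepsilon)$, with $A=D^2u$ and $\rho_x=F(x,\cdot)/\overline F(x)$. This correction is $O(\varepsilon^2)$ in size but of order one at the level of second derivatives, and with frozen parameters it reproduces the microscopic measure exactly. The hard input is Theorem~\ref{thm:measure-corrector-lipschitz}: $v_{A,\rho\mu}$ is Lipschitz in $L^\infty$ with respect to $(A,\rho)$, uniformly in $\mu$. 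It is proved from Proposition~\ref{prop:jltt-paired-holder}.

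The proof then runs at scale $\varepsilon$:
\begin{itemize}
\item Take $x_0$ an interior maximum of $u^\varepsilon-V_F^\varepsilon+\kappa M|x|^2$.
\item Build the barrier $Q^\varepsilon$ from the frozen corrector with parameters $((1-L\varepsilon^\alpha/\lambda)A(x_0),\rho_{x_0})$. It satisfies $\det D^2Q^\varepsilon\leq\det D^2u^\varepsilon$ as measures on $B_\varepsilon(x_0)\cap\Omega$.
\item By the Lipschitz stability, $Q^\varepsilon$ stays within $C\varepsilon^{2+\alpha}$ of $V_F^\varepsilon$ there, up to a linear term.
\item The penalization gains $\kappa M\varepsilon^2$ on $\partial B_\varepsilon(x_0)$, so $M>K\varepsilon^\alpha$ contradicts the comparison principle.
\end{itemize}
Because the measures are ordered on every subset of the ball, no cell averaging or boundary-layer estimate is ever needed.
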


The same conclusion holds for jointly Borel measurable $F$, provided
that the bounds above hold for all $x,x_1,x_2\in\overline\Omega$ and
every $y$ outside a fixed $\mu$-null subset of $\mathbb T^n$.

When $\mu=dy$,
Theorem~\ref{thm:weighted-measure-holder-rate} gives the $m=0$ estimate
in Theorem~\ref{thm:main-result} without assuming H\"older continuity
in $y$. Example~\ref{ex:fixed-data-sharpness} shows that the exponent
$\alpha$ is sharp.
Taking $F(x,y)=F(x)$ gives:

\begin{Corollary}
\label{cor:fixed-measure-holder-rate}
Let $\alpha\in(0,1]$ and let $\mu$ be a normalized nonnegative locally finite
$\mathbb Z^n$-periodic Borel measure. Let
$F\in C^{0,\alpha}(\overline\Omega)$ satisfy
$0<\lambda\leq F\leq\Lambda$, and let
$u^\varepsilon$ and $u$ be the convex solutions of
\[
\det D^2u^\varepsilon=F(x)\mu_\varepsilon,\quad
\det D^2u=F(x) \quad\text{in }\Omega,
\qquad
u^\varepsilon=u=\varphi \quad\text{on }\partial\Omega.
\]
If $u\in C^{2,\alpha}(\overline\Omega)$, then
\[
\|u^\varepsilon-u\|_{L^\infty(\Omega)}
\leq C\varepsilon^\alpha,\qquad \forall\,0<\varepsilon\leq1.
\]
The constant $C$ depends only on $n$, $\alpha$, $\lambda$, $\Lambda$,
the inradius of $\Omega$, $\operatorname{diam}\Omega$,
$\|F\|_{C^{0,\alpha}(\overline\Omega)}$, and
$\|u\|_{C^{2,\alpha}(\overline\Omega)}$.
\end{Corollary}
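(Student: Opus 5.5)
The plan is to obtain Corollary~\ref{cor:fixed-measure-holder-rate} as the special case of Theorem~\ref{thm:weighted-measure-holder-rate} in which the macroscopic weight does not depend on the fast variable. Concretely, I will set $\tilde F(x,y):=F(x)$ for $(x,y)\in\overline\Omega\times\mathbb T^n$, check that $\tilde F$ satisfies every hypothesis of Theorem~\ref{thm:weighted-measure-holder-rate}, and then read off the estimate.

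The hypotheses check out directly. Since $F\in C^{0,\alpha}(\overline\Omega)$ is continuous, $\tilde F\in C(\overline\Omega\times\mathbb T^n)$, and it is trivially periodic in $y$. The bounds $\lambda\le\tilde F\le\Lambda$ hold for every $(x,y)$. The H\"older condition
\[
|\tilde F(x_1,y)-\tilde F(x_2,y)|=|F(x_1)-F(x_2)|\le [F]_{C^{0,\alpha}(\overline\Omega)}|x_1-x_2|^\alpha
\]
holds for all $y$, so we may take $L:=\|F\|_{C^{0,\alpha}(\overline\Omega)}$.

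Next I identify the two problems. The oscillatory problem \eqref{eq:oscillatory-problem} with $\tilde F$ reads
\[
\det D^2u^\varepsilon=\tilde F(x,x/\varepsilon)\,\mu_\varepsilon=F(x)\,\mu_\varepsilon,
\]
with data $\varphi$. For the effective problem \eqref{eq:effective-problem}, the normalization $\mu(\mathbb T^n)=1$ gives
\[
\overline{\tilde F}(x)=\int_{\mathbb T^n}F(x)\,d\mu(y)=F(x)\,\mu(\mathbb T^n)=F(x).
\]
So the effective equation is exactly $\det D^2u=F(x)$ with $u=\varphi$ on $\partial\Omega$. By uniqueness of convex (Alexandrov) solutions to both Dirichlet problems, the functions $u^\varepsilon$ and $u$ in the corollary coincide with those in Theorem~\ref{thm:weighted-measure-holder-rate}.

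Finally, the assumption $u\in C^{2,\alpha}(\overline\Omega)$ is the same in both statements. Theorem~\ref{thm:weighted-measure-holder-rate} therefore yields \eqref{eq:weighted-measure-holder-rate}, with $C$ depending on $n,\alpha,\lambda,\Lambda$, $L=\|F\|_{C^{0,\alpha}(\overline\Omega)}$, the inradius and diameter of $\Omega$, and $\|u\|_{C^{2,\alpha}(\overline\Omega)}$. This is the claimed dependence, and in particular $C$ is independent of $\mu$. There is no real obstacle; the only point needing care is the normalization $\mu(\mathbb T^n)=1$, which is what makes $\overline{\tilde F}=F$.
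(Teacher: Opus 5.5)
Your proposal is correct and follows the paper's route exactly: the paper obtains the corollary by taking $F(x,y)=F(x)$ in Theorem~\ref{thm:weighted-measure-holder-rate}. As you note, $L$ is controlled by $\|F\|_{C^{0,\alpha}(\overline\Omega)}$, and the normalization $\mu(\mathbb T^n)=1$ gives $\overline F=F$, so the two Dirichlet problems coincide.
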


\begin{Remark}
We very recently became aware of the preprint \cite{CGLWZ} by Cui, Gu, Le, Wang, and Zhou. In the setting $F\equiv 1$ and $\mu_\varepsilon=f(\frac{x}{\varepsilon})\,dx$, under the assumptions that $0<\lambda\leq f\leq \Lambda$, $\varphi\in C^3(\overline\Omega)$ and $\Omega$ is a bounded smooth uniformly convex domain, they independently establish the $\varepsilon$-convergence rate appearing in Corollary \ref{cor:fixed-measure-holder-rate}. In addition, they obtain an $\varepsilon$-convergence rate in a H\"older norm for $u^\varepsilon-u$.
\end{Remark}

The density and measure-valued results use the same periodic cell problems
but different stability mechanisms. For positive H\"older densities, differentiating
the corrector in its parameters produces global approximate solutions. For
general periodic measures, the corrector is controlled through uniform
Lipschitz dependence on its parameters; the proof freezes the parameters at
contact points and uses a localized comparison argument.

\begin{Remark}
\label{rem:general-periodicity}
All the results remain valid if the
$\varepsilon\mathbb Z^n$-periodic measure $\mu_\varepsilon$ is replaced
by a normalized $\Gamma_\varepsilon$-periodic measure, where
$\Gamma_\varepsilon$ admits a fundamental cell $Q_\varepsilon$
satisfying $\operatorname{diam}Q_\varepsilon\leq\varepsilon$, with the
periodicity of $F(x,\cdot)$ adjusted accordingly.
This follows by passing to a larger period cell comparable to a cube
and applying a linear change of variables.
\end{Remark}

The rest of the paper is organized as follows.
Section~\ref{sec:periodic-cell-problems} reviews the periodic classification
theory, fixes the notation for the cell problems, and records a comparison
principle for approximate solutions.
Section~\ref{sec:corrector-estimates} develops smooth corrector expansions
and proves Theorem~\ref{thm:main-result}.
Finally, Section~\ref{sec:measure-valued-setting} establishes uniform
corrector estimates in the measure-valued setting, including joint stability
with respect to the Hessian and the microscopic weight, and proves
Theorem~\ref{thm:weighted-measure-holder-rate}.

\section{Preliminaries}
\label{sec:periodic-cell-problems}

Throughout the paper, $c$ and $C$ denote positive constants whose values
may change from line to line. We use the Einstein summation convention.
Equations of the form $\det D^2u=f$ or $\det D^2u=\mu$ are understood
in the Alexandrov sense. When no confusion arises, we identify a density
$f$ with the measure $f\,dx$.

\subsection{Periodic solutions}
\label{subsec:periodic-solutions}

Let $\mathcal S^{n\times n}$ denote the space of symmetric $n\times n$
matrices and let $\mathcal S_+^{n\times n}$ denote its subset of positive
definite matrices. Let $\nu$ be a nonzero nonnegative locally finite
$\mathbb Z^n$-periodic Borel measure. A \emph{periodic solution} of
$\det D^2w=\nu$ in $\mathbb R^n$ is a convex solution of the form
\[
w(y)=q_A(y)+\ell(y)+v(y),
\qquad q_A(y):=\frac12y\cdot Ay,
\]
where $A\in\mathcal S_+^{n\times n}$, $\ell$ is affine, and $v$ is
$\mathbb Z^n$-periodic. The matrix $A$ necessarily satisfies the
compatibility condition
\[
\det A=\nu(\mathbb T^n).
\]
Indeed, for every $\zeta\in C^2(\mathbb T^n)$,
\begin{equation}
\label{eq:periodic-cell-compatibility}
\int_{\mathbb T^n}\det(A+D^2\zeta)\,dy=\det A.
\end{equation}
Applying \eqref{eq:periodic-cell-compatibility} to periodic mollifications
of $v$ and passing to the limit gives the compatibility condition.

Conversely, for each $A\in\mathcal S_+^{n\times n}$ satisfying this
condition, there exists a periodic function $v$ such that
$q_A+v$ is convex and solves $\det D^2(q_A+v)=\nu$ in $\mathbb R^n$;
$v$ is unique up to an additive constant
\cite[Theorem~1.2]{jin2025liouville}.
Moreover, every entire convex solution of $\det D^2w=\nu$ has the
above form \cite[Theorem~1.3]{jin2025liouville}.

From now on, we assume $\nu(\mathbb T^n)=1$.
For each $A\in\mathcal S_+^{n\times n}$, let
$w_{A,\nu}=q_A+v_{A,\nu}$ be the unique normalized periodic solution of
\begin{equation}
\label{eq:measure-cell-problem}
\det D^2w_{A,\nu}=(\det A)\nu
\quad\text{in }\mathbb R^n,
\end{equation}
where $v_{A,\nu}$ is $\mathbb Z^n$-periodic and
$\int_{\mathbb T^n}v_{A,\nu}(y)\,dy=0$.
By \cite[Lemma~2.2 and the argument on pp.~249--250]{li1990existence},
\begin{equation}
\label{eq:measure-corrector-size}
\|v_{A,\nu}\|_{L^\infty(\mathbb T^n)}
+\operatorname{Lip}(v_{A,\nu})
\leq C_n\operatorname{tr}A.
\end{equation}
Here $C_n$ depends only on $n$.

When $\nu=\rho\,dy$, we write $v_{A,\rho}$ and $w_{A,\rho}$ for
$v_{A,\rho\,dy}$ and $w_{A,\rho\,dy}$.
For a fixed reference measure $\mu$, we omit it from the subscripts and
write $v_A$ and $w_A$.

\subsection{A comparison principle}

We shall use the following consequence of the comparison principle.

\begin{Proposition}
\label{prop:approximate-solution-principle}
Fix $\beta\in(0,2]$ and $c_0,C_0>0$.
Let $\varepsilon>0$ be sufficiently small depending only on
$n,\beta,c_0,C_0$, and let $u^\varepsilon$ and $u$ be the convex solutions of
\eqref{eq:oscillatory-problem} and \eqref{eq:effective-problem}, with
$\mu=dy$ and $0<\lambda\leq F\leq\Lambda$.
Suppose there exists
$V^\varepsilon\in C^2(\Omega)\cap C(\overline\Omega)$ such that
\[
\|V^\varepsilon-u\|_{L^\infty(\overline\Omega)}
\leq C_0\varepsilon^\beta,
\qquad
c_0I\leq D^2V^\varepsilon\leq C_0I,
\qquad
\left|\det D^2V^\varepsilon-F(x,x/\varepsilon)\right|
\leq C_0\varepsilon^\beta
\]
in $\Omega$. 
Then,
\[
\|u^\varepsilon-u\|_{L^\infty(\Omega)}
\leq C\varepsilon^\beta.
\]
The constant $C$ depends only on $n$, $c_0$, $C_0$, and
$\operatorname{diam}\Omega$.
\end{Proposition}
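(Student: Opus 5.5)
We compare $u^\varepsilon$ with perturbations of $V^\varepsilon$, then use the triangle inequality. Since $\|V^\varepsilon-u\|_{L^\infty}\le C_0\varepsilon^\beta$, it suffices to show $\|u^\varepsilon-V^\varepsilon\|_{L^\infty(\Omega)}\le C\varepsilon^\beta$.

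\emph{Sub- and supersolutions.} Fix a point $x_0\in\Omega$ and set $d:=\operatorname{diam}\Omega$ and $q(x):=\frac12|x-x_0|^2$, so that $0\le q\le d^2/2$ on $\overline\Omega$. For a constant $K$ to be chosen, define
\[
V^\pm:=V^\varepsilon\mp K\varepsilon^\beta q \mp C_0\varepsilon^\beta .
\]
On $\partial\Omega$ we have $u^\varepsilon=u=\varphi$ and $|V^\varepsilon-\varphi|\le C_0\varepsilon^\beta$. Hence $V^+\le\varphi$ and, since $K\varepsilon^\beta q\ge0$, also $V^-\ge\varphi$ there. Interiorly, for $\varepsilon$ small enough that $K\varepsilon^\beta\le c_0/2$, the matrix $D^2V^-=D^2V^\varepsilon-K\varepsilon^\beta I$ stays positive definite, so $V^-$ is convex.

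\emph{Determinant expansion.} Since $c_0I\le D^2V^\varepsilon\le C_0 I$, the determinant is uniformly monotone along the identity direction. Concretely, for $0\le t\le c_0/2$,
\[
\det(D^2V^\varepsilon+tI)\ge \det D^2V^\varepsilon+t\,c_0^{\,n-1}n,
\qquad
\det(D^2V^\varepsilon-tI)\le \det D^2V^\varepsilon-t\,(c_0/2)^{n-1}n .
\]
These follow from the mean value theorem, since $\frac{d}{ds}\det(M+sI)=\operatorname{tr}\operatorname{cof}(M+sI)\ge n\,(\lambda_{\min})^{n-1}$. Take $t=K\varepsilon^\beta$ and combine with $|\det D^2V^\varepsilon-F(x,x/\varepsilon)|\le C_0\varepsilon^\beta$. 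Choosing $K=2C_0/(n(c_0/2)^{n-1})$, which depends only on $n,c_0,C_0$, we obtain pointwise in $\Omega$
\[
\det D^2V^+\ge F(x,x/\varepsilon),\qquad \det D^2V^-\le F(x,x/\varepsilon).
\]
Here $V^\pm\in C^2$, so these are classical inequalities and hence also Alexandrov inequalities between measures. The smallness requirement $K\varepsilon^\beta\le c_0/2$ is exactly the restriction on $\varepsilon$ depending on $n,\beta,c_0,C_0$.

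\emph{Comparison and conclusion.} By the Alexandrov comparison principle for convex functions, $\det D^2V^+\ge\det D^2u^\varepsilon$ in $\Omega$ together with $V^+\le u^\varepsilon$ on $\partial\Omega$ gives $V^+\le u^\varepsilon$ in $\Omega$. Similarly, $V^-\ge u^\varepsilon$. Here $\mu=dy$, so $\mu_\varepsilon=dx$ and the right-hand side of \eqref{eq:oscillatory-problem} is the bounded function $F(x,x/\varepsilon)$.

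One technical point needs care: $V^\pm$ are only $C^2(\Omega)\cap C(\overline\Omega)$, not $C^2$ up to the boundary. This is harmless for the Alexandrov comparison principle, which requires only continuity up to $\partial\Omega$ and the measure inequality inside. If needed, one can compare on $\Omega_\delta\Subset\Omega$, absorb a small boundary error using uniform continuity, and let $\delta\to0$.

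Combining the two comparisons,
\[
|u^\varepsilon-V^\varepsilon|\le (K d^2/2+C_0)\varepsilon^\beta ,
\]
and therefore $\|u^\varepsilon-u\|_{L^\infty(\Omega)}\le (Kd^2/2+2C_0)\varepsilon^\beta$, with a constant depending only on $n,c_0,C_0,\operatorname{diam}\Omega$.

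The only genuinely delicate step is the quantitative monotonicity of $\det$ along $\pm tI$. It needs the two-sided Hessian bound, and in particular the lower bound $c_0$, so that the perturbation both preserves convexity and shifts the determinant by an amount comparable to $t$. Without the lower bound $c_0$, a perturbation of size $\varepsilon^\beta$ would not be guaranteed to move the determinant by order $\varepsilon^\beta$.
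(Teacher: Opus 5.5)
Your strategy matches the paper's: perturb $V^\varepsilon$ by $\pm\varepsilon^\beta$ times a quadratic, apply the comparison principle, then use the triangle inequality. Your monotonicity estimate for $\det$ along $\pm tI$, with $K=2C_0/(n(c_0/2)^{n-1})$, is also correct. But the construction as written has a sign error, and it breaks the argument.

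\textbf{The sign error.} With $V^\pm=V^\varepsilon\mp K\varepsilon^\beta q\mp C_0\varepsilon^\beta$ you get
\[
D^2V^+=D^2V^\varepsilon-K\varepsilon^\beta I,\qquad D^2V^-=D^2V^\varepsilon+K\varepsilon^\beta I,
\]
not $D^2V^\varepsilon-K\varepsilon^\beta I$ for $V^-$ as you wrote. Your own determinant estimates then give
\[
\det D^2V^+\le F-C_0\varepsilon^\beta,\qquad \det D^2V^-\ge F+C_0\varepsilon^\beta.
\]
So $V^+$ is a supersolution lying below $\varphi$ on $\partial\Omega$, and $V^-$ is a subsolution lying above $\varphi$. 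The comparison principle gives nothing in that configuration.

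Suppose instead you meant the quadratic to enter as $\pm K\varepsilon^\beta q$; that is what your convexity remark about $V^-$ and your claimed inequalities $\det D^2V^+\ge F\ge\det D^2V^-$ require. Then the boundary inequalities fail. On $\partial\Omega$, $V^+=V^\varepsilon+K\varepsilon^\beta q-C_0\varepsilon^\beta$ can exceed $\varphi$ by up to $Kd^2\varepsilon^\beta/2$. Likewise the justification ``since $K\varepsilon^\beta q\ge0$, also $V^-\ge\varphi$'' is backwards, because the quadratic now enters $V^-$ with a minus sign. The single constant shift $C_0\varepsilon^\beta$ cannot serve both purposes.

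\textbf{The fix.} Decouple the two shifts and let the constant absorb the quadratic:
\[
V^\pm:=V^\varepsilon\pm K\varepsilon^\beta q\mp\bigl(C_0+Kd^2/2\bigr)\varepsilon^\beta.
\]
Then:
\begin{itemize}
\item $V^+\le\varphi\le V^-$ on $\partial\Omega$;
\item $\det D^2V^+\ge F(x,x/\varepsilon)\ge\det D^2V^-$ in $\Omega$;
\item $V^-$ is convex once $K\varepsilon^\beta\le c_0/2$;
\item comparison gives $|u^\varepsilon-V^\varepsilon|\le(C_0+Kd^2/2)\varepsilon^\beta$, which is exactly the constant you state at the end.
\end{itemize}

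This corrected version is the paper's proof. The paper uses $W^\pm=V^\varepsilon\pm c\delta|x|^2$ and compares $u^\varepsilon$ with $W^+-C\delta$ and $W^-+C\delta$, with $C$ absorbing the quadratic. Your centering of $q$ at a point $x_0\in\Omega$ is slightly cleaner, since it makes the dependence on $\operatorname{diam}\Omega$ alone transparent. One minor point: only the lower bound $D^2V^\varepsilon\ge c_0I$ is used, not the two-sided Hessian bound you say is needed.
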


\begin{proof}
Take $\delta=\varepsilon^\beta$ and set
\[
W^\pm=V^\varepsilon\pm c\delta|x|^2.
\]
Since $D^2W^\pm=D^2V^\varepsilon\pm2c\delta I$, we may choose $c$
sufficiently large so that, for sufficiently small $\varepsilon$,
both functions are convex and
\[
\det D^2W^+\geq\det D^2u^\varepsilon\geq\det D^2W^-.
\]
On $\partial\Omega$, where $u^\varepsilon=u$, the bound
$\|W^\pm-u\|_{L^\infty(\overline\Omega)}\leq C\delta$ gives
\[
W^+-C\delta\leq u^\varepsilon\leq W^-+C\delta
\quad\text{on }\partial\Omega.
\]
The comparison principle gives the same inequalities in $\Omega$.
Combining these inequalities with
$\|W^\pm-u\|_{L^\infty(\overline\Omega)}\leq C\delta$, we obtain
$\|u^\varepsilon-u\|_{L^\infty(\Omega)}\leq2C\delta$, as claimed.
\end{proof}

\subsection{Approximation of periodic measures}
\label{sec:measure-approximation}

The approximation below allows us to work with smooth positive periodic
densities while preserving the effective equation.

\begin{Lemma}
\label{lem:smooth-measure-approximation}
Let $\mathcal O\subset\mathbb R^n$ be open, $m\in\{0,1\}$, and
$\alpha\in(0,1]$. Let $\mu$ be a normalized nonnegative locally finite
periodic Borel measure. Assume that $F$ is jointly Borel and periodic
in $y$. For $\mu$-a.e.\ $y$, suppose that
$0<\lambda\leq F(\cdot,y)\leq\Lambda$ and
$F(\cdot,y)\in C^{m,\alpha}(\mathcal O)$, with norm bounded independently
of $y$.
There exist smooth positive periodic densities $f_k$ of mass one and
coefficients $F_k$ that are smooth and periodic in $y$ and satisfy the
same bounds as $F$. Writing $d\mu_k=f_k\,dy$, we have
\[
\int_{\mathbb T^n}F_k(x,y)\,d\mu_k(y)
=\overline F(x):=\int_{\mathbb T^n}F(x,y)\,d\mu(y).
\]
For every fixed $\varepsilon>0$, set
$f_{k,\varepsilon}(x):=f_k(x/\varepsilon)$. Then, as $k\to\infty$,
\[
F_k(x,x/\varepsilon)f_{k,\varepsilon}(x)\,dx
\rightharpoonup F(x,x/\varepsilon)\,d\mu_\varepsilon(x)
\quad\text{locally in }\mathcal O.
\]
\end{Lemma}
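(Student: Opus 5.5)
We mollify the periodic measure and correct the coefficient so that the effective density is preserved exactly.

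\textbf{Step 1: smoothing the measure.} Let $\eta_k$ be a standard smooth mollifier on $\mathbb R^n$ supported in $B_{1/k}$. Define the smooth periodic density
\[
f_k:=(1-\tfrac1k)\,(\eta_k*\mu)+\tfrac1k .
\]
It is positive and has mass one on $\mathbb T^n$. As $k\to\infty$, the measures $\mu_k:=f_k\,dy$ converge weakly to $\mu$ on $\mathbb T^n$.

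\textbf{Step 2: defining $F_k$.} The naive choice $F_k(x,\cdot)=F(x,\cdot)$ is not smooth in $y$ and need not preserve $\overline F$. Instead we use the disintegration-type formula
\[
F_k(x,y):=\frac{\bigl(\eta_k*(F(x,\cdot)\mu)\bigr)(y)\,(1-\frac1k)+\frac1k\,\overline F(x)}{f_k(y)}.
\]
This has the following properties.
\begin{itemize}[label={}]
\item \emph{Smoothness and periodicity.} $F_k$ is smooth and periodic in $y$.
\item \emph{Bounds.} Both numerator terms are averages of values of $F$ against the positive weights that make up $f_k$. Hence $\lambda\le F_k\le\Lambda$.
\item \emph{Regularity in $x$.} Difference quotients in $x$ commute with the construction. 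So the $C^{m,\alpha}(\mathcal O)$ norm in $x$ is bounded by that of $F$, uniformly in $y$. This uses that the seminorm bound for $F(\cdot,y)$ is uniform for $\mu$-a.e.\ $y$, and that $\overline F$ inherits it.
\item \emph{Effective density.} Integrating, $\int F_k f_k\,dy=(1-\frac1k)\int F(x,\cdot)\,d\mu+\frac1k\overline F=\overline F$, exactly.
\end{itemize}
Joint measurability is clear.

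\textbf{Step 3: weak convergence for fixed $\varepsilon$.} Fix $\varepsilon$ and a test function $\phi\in C_c(\mathcal O)$. By construction,
\[
F_k(x,x/\varepsilon)f_{k,\varepsilon}(x)=(1-\tfrac1k)\,\eta_{k}*\bigl(F(x,\cdot)\mu\bigr)(x/\varepsilon)+\tfrac1k\overline F(x).
\]
The last term contributes $O(1/k)$. Changing variables $z=x/\varepsilon$, the main term becomes
\[
\varepsilon^n\iint\phi(\varepsilon z)\,\eta_k(z-y)\,F(\varepsilon z,y)\,d\mu(y)\,dz .
\]
We compare this with $\varepsilon^n\int\phi(\varepsilon y)F(\varepsilon y,y)\,d\mu(y)$, which is exactly $\int\phi\,F(x,x/\varepsilon)\,d\mu_\varepsilon$. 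The error is bounded by
\[
\varepsilon^n\iint\eta_k(z-y)\,\bigl|\phi(\varepsilon z)F(\varepsilon z,y)-\phi(\varepsilon y)F(\varepsilon y,y)\bigr|\,d\mu(y)\,dz .
\]
Here $|z-y|\le 1/k$. The function $x\mapsto\phi(x)F(x,y)$ is uniformly continuous, with modulus independent of $y$ for $\mu$-a.e.\ $y$, because of the uniform $C^{0,\alpha}$ bound on the support of $\phi$. The relevant $\mu$-mass is locally finite, so the error tends to $0$. This gives the stated convergence.

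\textbf{Main obstacle.} The delicate point is that $F$ is only Borel in $y$. For this reason the smoothing must be applied to the product measure $F(x,\cdot)\mu$ rather than to $F$ itself. Then all the needed bounds follow from positivity of averaging, and the weak limit uses only uniform $x$-continuity and never continuity in $y$.
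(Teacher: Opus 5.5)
Your proposal is correct and is essentially the paper's own argument. Your $f_k$ and $F_k$ are exactly $\mu*\tilde\eta_k$ and $\bigl((F(x,\cdot)\mu)*\tilde\eta_k\bigr)/f_k$ for the strictly positive periodic approximate identity $\tilde\eta_k=(1-\frac1k)\eta_k+\frac1k$, which is the paper's construction, and your weak-convergence step is the same Fubini argument using continuity in $x$ that is uniform in $y$ (you split off the $\frac1k$ part explicitly and test directly against $C_c(\mathcal O)$ functions, which is only cosmetic).
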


\begin{proof}
We choose a smooth strictly positive periodic approximate identity
$(\eta_k)$ of mass one and set
\[
f_k=\mu*\eta_k,
\qquad
F_k(x,\cdot)f_k=(F(x,\cdot)\mu)*\eta_k.
\]
The coefficients $F_k$ are weighted averages of $F$, with weights
independent of $x$, so the positive bounds and the $C^{m,\alpha}$ bounds
in $x$ are preserved. Integrating over $\mathbb T^n$, we see that
$\mu_k(\mathbb T^n)=1$ and $\overline F$ is unchanged.
The same averaging preserves constants and inequalities between coefficients.
As $k\to\infty$, $\mu_k\rightharpoonup\mu$ and
$F_k(x,\cdot)\mu_k\rightharpoonup F(x,\cdot)\mu$ on $\mathbb T^n$
for each fixed $x$.

Fix $\varepsilon>0$ and
$\zeta\in C_c^\infty(\mathcal O)$, and extend $\zeta F$ by zero
outside $\mathcal O$ in the $x$-variable.
By periodicity and Fubini's theorem, we have
\[
\begin{aligned}
&\int\zeta(x)F_k(x,x/\varepsilon)f_{k,\varepsilon}(x)\,dx=\int_{[-1/2,1/2)^n}\eta_k(h)
\int\zeta(x+\varepsilon h)
F(x+\varepsilon h,x/\varepsilon)\,d\mu_\varepsilon(x)\,dh.
\end{aligned}
\]
Since $\zeta F$ is $C^{0,\alpha}$ in $x$, uniformly in $y$, we obtain that 
the right-hand side converges to
$\int\zeta(x)F(x,x/\varepsilon)\,d\mu_\varepsilon(x)$.
Periodicity, normalization, and the bound $F_k\leq\Lambda$ give uniform
local mass bounds. By approximation, we obtain the same convergence for
all continuous compactly supported test functions.
\end{proof}

Let $u_k^\varepsilon$ denote the convex solution of
\eqref{eq:oscillatory-problem} with $(\mu,F)$ replaced by $(\mu_k,F_k)$.
For each fixed $\varepsilon>0$, the right-hand sides have uniformly
bounded mass and the boundary values are unchanged. By stability of
Alexandrov solutions
\cite[Lemma~5.1 and its proof]{hartenstine2006dirichlet}, we obtain
\[
u_k^\varepsilon\to u^\varepsilon
\quad\text{uniformly on }\overline\Omega.
\]
Since $u$ is unchanged, any estimate for $u_k^\varepsilon-u$ that is
uniform in $k$ passes to the limit.

Similarly, let $\nu_k$ and $\nu$ be normalized periodic measures with
$\nu_k\rightharpoonup\nu$, and fix $A\in\mathcal S_+^{n\times n}$.
Using the corrector bounds \eqref{eq:measure-corrector-size}, weak
continuity of Monge--Amp\`ere measures, and uniqueness, we obtain
\[
v_{A,\nu_k}\to v_{A,\nu}
\quad\text{uniformly on }\mathbb T^n.
\]

\section{Corrector expansions for H\"older densities}
\label{sec:corrector-estimates}

In this section we prove Theorem~\ref{thm:main-result} for $\mu=dy$.
To motivate the construction, we assume that the data are smooth
and consider the formal expansion
\[
u^\varepsilon(x)
=u(x)+\varepsilon a(x)+\varepsilon^2v(x,y)
+\varepsilon^3\psi(x,y)+\cdots,
\]
where $y=x/\varepsilon$ and $v,\psi$ are periodic in $y$.
Substituting this expansion into \eqref{eq:oscillatory-problem} and
matching the zeroth-order terms, we obtain
\[
\det\bigl(D^2u(x)+D_y^2v(x,y)\bigr)=F(x,y).
\]
Since $\det D^2u=\overline F$, we take the normalized solution
$v(x,y)=v_{A(x),\rho(x,\cdot)}(y)$, where $A(x)=D^2u(x)$ and
$\rho(x,y)=F(x,y)/\overline F(x)$.
Collecting the order-$\varepsilon$ terms in $\det D^2u^\varepsilon$ gives
\[
W^{ij}\bigl(a_{ij}+v_{x_i y_j}+v_{x_j y_i}
+\psi_{y_i y_j}\bigr)=0.
\]
Here $W=\operatorname{cof}(A+D_y^2v)$.
Integrating over $\mathbb T^n$ and using integration by parts, we find
that the terms containing $y$-derivatives vanish by periodicity and
the divergence-free property of $W$.
Using $\int_{\mathbb T^n}W\,dy=\operatorname{cof}A$, we obtain
\[
(\operatorname{cof}D^2u)^{ij}a_{ij}=0
\quad\text{in }\Omega.
\]
Matching the boundary values at order $\varepsilon$ gives
$a=0$ on $\partial\Omega$, so the maximum principle yields $a\equiv0$.
We therefore determine $\psi(x,\cdot)$ from
\[
W^{ij}\psi_{y_i y_j}=-2W^{ij}v_{x_i y_j}
\quad\text{in }\mathbb T^n,
\qquad
\int_{\mathbb T^n}\psi(x,y)\,dy=0,
\]
where the last equation fixes the additive constant.

It is worth emphasizing that, for general nondivergence-form elliptic
equations, the coefficient $a$ of the $O(\varepsilon)$ term need not
vanish, and the $O(\varepsilon)$ convergence rate is generally optimal;
see \cite{GTY2020,GST2025}.
In the present setting, however, the divergence-free property of $W=\operatorname{cof}\!\left(A+D_y^2v\right)$ produces an additional first-order cancellation, which forces $a\equiv0$.
This allows us to obtain a better convergence rate $O(\varepsilon^{1+\alpha})$ for $0<\alpha \leq 1$ under appropriate regularity assumptions, which is optimal, as in Theorem \ref{thm:main-result}.

We recall the following regularity result for periodic correctors;
see \cite[Theorem~2.1]{li1990existence} and
\cite[Theorem~0.2 and Remark~0.6]{caffarelli2004liouville}.

\begin{Theorem}
\label{thm:smooth-periodic-cell-problem}
Let $\gamma\in(0,1)$, $A\in\mathcal S_+^{n\times n}$, and let
$\rho\in C^{0,\gamma}(\mathbb T^n)$ be positive with
$\int_{\mathbb T^n}\rho=1$. Then
$v_{A,\rho}\in C^{2,\gamma}(\mathbb T^n)$ and
\begin{equation}
\label{eq:parametric-corrector}
\det(A+D_y^2v_{A,\rho})=(\det A)\rho,
\qquad
A+D_y^2v_{A,\rho}>0
\quad\text{in }\mathbb T^n.
\end{equation}
More generally, $\rho\in C^{k,\gamma}$ for an integer $k\geq0$ implies
$v_{A,\rho}\in C^{k+2,\gamma}$, and smooth data give a smooth solution.
\end{Theorem}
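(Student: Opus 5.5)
The statement is a recalled result (cited from Li and Caffarelli--Li), so I will only sketch how the argument goes, in three steps: existence and uniqueness, a priori estimates, and regularity.

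\emph{Existence and uniqueness.} By the periodic theory of Section~\ref{subsec:periodic-solutions}, applied with $\nu=\rho\,dy$ (which has mass one), there is a periodic $v=v_{A,\rho}$ with $q_A+v$ convex and
\[
\det D^2(q_A+v)=(\det A)\rho.
\]
It is unique up to constants, and it obeys the Lipschitz bound \eqref{eq:measure-corrector-size}.

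\emph{Strict convexity and $C^{1,\beta}$.} Since $\rho$ is bounded between positive constants, the right-hand side is a doubling measure comparable to Lebesgue measure. The difficulty is to rule out degenerate sections. I would argue as follows.
\begin{enumerate}[label=(\roman*)]
\item The function $w=q_A+v$ stays within a bounded distance of the strictly convex quadratic $q_A$.
\item So every section $S_h(w,y_0)$ lies between two ellipsoids comparable to $\{q_A<Ch\}$ once $h$ exceeds a constant depending on $\|v\|_\infty$.
\item A line segment on which $w$ is affine would extend to $\partial S_h$ by Caffarelli's theorem. It cannot reach infinity, since $w\ge q_A-C$ grows quadratically, so no such segment exists.
\item Hence $w$ is strictly convex, and Caffarelli's interior theory gives $w\in C^{1,\beta}$.
\end{enumerate}

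\emph{$C^{2,\gamma}$ and higher regularity.} Once $w$ is strictly convex and $\rho\in C^{0,\gamma}$, Caffarelli's interior $C^{2,\gamma}$ estimate applies on normalized sections of fixed height. This gives $D^2w\in C^{0,\gamma}$ with $D^2w>0$, uniformly over $\mathbb R^n$ by periodicity, so $v\in C^{2,\gamma}(\mathbb T^n)$ and \eqref{eq:parametric-corrector} holds. With $D^2w$ bounded above and below, the equation is uniformly elliptic on $w$. For higher regularity, difference quotients of $v$ solve linear uniformly elliptic equations with $C^{0,\gamma}$ coefficients, and Schauder theory bootstraps $\rho\in C^{k,\gamma}$ to $v\in C^{k+2,\gamma}$ by induction. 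Smooth data therefore give smooth $v$.

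The main obstacle is the strict convexity step, that is, excluding degenerate sections. This is where the periodic structure and the quadratic growth inherited from $q_A$ are essential.
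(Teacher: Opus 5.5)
Your sketch is correct and is essentially the standard argument behind the results the paper invokes; the paper gives no proof of its own and only cites Li's Theorem~2.1 and Caffarelli--Li's Theorem~0.2 and Remark~0.6. In outline, bounded distance from $q_A$ makes every contact set $\{w=\ell\}$ compact, so Caffarelli's localization theorem forces it to be a point. Caffarelli's interior $C^{1,\beta}$ and $C^{2,\gamma}$ estimates, made uniform by the $\mathbb Z^n$-periodicity of $D^2w$, and Schauder estimates for difference quotients of $v$ then give the claim. For smooth data Li instead obtains smooth solutions directly by the continuity method, but your bootstrap is equally valid.

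Your step (ii) on the shape of large sections is not needed. Strict convexity follows because a compact contact set that is not a singleton has an extremal point in $\mathbb R^n$, which Caffarelli's theorem excludes.
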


We next study the dependence of $v_{A,\rho}$ on $(A,\rho)$.
For $0<\gamma<1$ and $k\in\mathbb N_0$, define
\[
C_0^{k,\gamma}(\mathbb T^n)
:=\left\{f\in C^{k,\gamma}(\mathbb T^n):
\int_{\mathbb T^n}f(y)\,dy=0\right\},
\]
and write
\[
X_\gamma
:=\mathcal S_+^{n\times n}\times
\left\{\rho\in C^{0,\gamma}(\mathbb T^n):
\rho>0,\ \int_{\mathbb T^n}\rho(y)\,dy=1\right\}.
\]

\begin{Lemma}
\label{lem:smooth-cell-dependence}
The corrector map $(A,\rho)\mapsto v_{A,\rho}$ is $C^3$ from
$X_\gamma$ to $C_0^{2,\gamma}(\mathbb T^n)$ in the Fr\'echet sense.
If $A\in\mathcal K\Subset\mathcal S_+^{n\times n}$,
$\rho\geq\lambda_0>0$, and $\|\rho\|_{C^{0,\gamma}}\leq M_0$, then
\[
\|v_{A,\rho}\|_{C^{2,\gamma}(\mathbb T^n)}
+\sum_{j=1}^3\|D_{(A,\rho)}^jv_{A,\rho}\|_{C^{2,\gamma}(\mathbb T^n)}
\leq C.
\]
Here parameter derivatives are measured in the induced operator norms,
and $C$ depends only on $n,\gamma,\mathcal K,\lambda_0,M_0$.
\end{Lemma}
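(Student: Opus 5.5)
We will apply the implicit function theorem in Banach spaces to the nonlinear map
\[
\mathcal G:\ \mathcal S_+^{n\times n}\times C^{0,\gamma}(\mathbb T^n)\times C_0^{2,\gamma}(\mathbb T^n)\supset\mathcal U\to C^{0,\gamma}(\mathbb T^n),
\qquad
\mathcal G(A,\rho,v):=\det(A+D_y^2v)-(\det A)\rho .
\]
Here $\mathcal U$ is the open set where $A+D^2_yv>0$. Since $\det$ is a polynomial in the entries and $C^{0,\gamma}$ is a Banach algebra, $\mathcal G$ is real-analytic, and in particular $C^\infty$, in the Fr\'echet sense. By Theorem~\ref{thm:smooth-periodic-cell-problem}, for every $(A,\rho)\in X_\gamma$ the corrector $v_{A,\rho}$ lies in $C_0^{2,\gamma}$ and satisfies $\mathcal G(A,\rho,v_{A,\rho})=0$ with $A+D^2v_{A,\rho}>0$. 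Uniqueness up to constants makes this the only zero in $\mathcal U$ with mean zero.

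The first main step is invertibility of the linearization. We have
\[
L\zeta:=D_v\mathcal G(A,\rho,v_{A,\rho})\zeta=W^{ij}\zeta_{y_iy_j},
\qquad W=\operatorname{cof}(A+D^2v_{A,\rho}).
\]
The matrix $W$ is $C^{0,\gamma}$, uniformly positive definite, and divergence free, so $L\zeta=\partial_i(W^{ij}\partial_j\zeta)$. As a map $C_0^{2,\gamma}\to C^{0,\gamma}$, its range is contained in the mean-zero subspace $C_0^{0,\gamma}$. The constraint $\int\rho=1$ keeps the equation inside this subspace: $\det A+\int\det(A+D^2v)-\det A\,\int\rho=0$ by \eqref{eq:periodic-cell-compatibility}, extended to $C^{2,\gamma}$ functions. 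We therefore regard $\mathcal G$ as a map into the affine-compatible space and restrict $\rho$ to the affine hyperplane $\{\int\rho=1\}$. With this restriction, $\mathcal G$ maps into $C_0^{0,\gamma}$, since $\int\det(A+D^2v)=\det A$ for periodic $v$.

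For $L:C_0^{2,\gamma}\to C_0^{0,\gamma}$ we argue as follows.
\begin{enumerate}
\item \emph{Injectivity.} The strong maximum principle on the torus forces $\zeta$ to be constant, hence $0$.
\item \emph{Surjectivity.} Fredholm theory (or Lax--Milgram for the divergence form, followed by Schauder regularity) gives solvability exactly when the right-hand side has mean zero. The adjoint kernel consists of constants because $W$ is divergence free.
\item \emph{Bounded inverse.} Periodic Schauder estimates give $\|\zeta\|_{C^{2,\gamma}}\le C\|L\zeta\|_{C^{0,\gamma}}$. We obtain this by combining the Schauder estimate with an $L^\infty$ bound from a compactness/contradiction argument using injectivity.
\end{enumerate}
The implicit function theorem then gives a locally $C^\infty$, in particular $C^3$, map $(A,\rho)\mapsto v$. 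By uniqueness it coincides with $v_{A,\rho}$, which proves the regularity claim.

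The second main step is the uniform bounds. The derivatives are computed by differentiating the identity $\mathcal G(A,\rho,v_{A,\rho})=0$:
\[
D_{(A,\rho)}v=-L^{-1}D_{(A,\rho)}\mathcal G .
\]
Higher derivatives are $L^{-1}$ applied to polynomial expressions in lower derivatives, $D^2v$, $A$, and the directions. Hence everything reduces to two facts:
\[
\text{(a) } \|v_{A,\rho}\|_{C^{2,\gamma}}\le C,
\qquad
\text{(b) } \|L^{-1}\|_{C_0^{0,\gamma}\to C_0^{2,\gamma}}\le C,
\]
with $C$ depending only on $n,\gamma,\mathcal K,\lambda_0,M_0$.

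Bound (a) is the quantitative form of Theorem~\ref{thm:smooth-periodic-cell-problem}. By \eqref{eq:measure-corrector-size}, $v$ is bounded and Lipschitz. The Caffarelli interior theory applies to $w=q_A+v$ solving $\det D^2w=(\det A)\rho$ with $\lambda_0\det A\le(\det A)\rho\le M_0\det A$. Periodicity and the bound on $v$ make sections of a fixed height have size comparable to $1$ and compactly contained in a bounded region, so strict convexity is uniform. The $C^{2,\gamma}$ estimates then follow with the stated dependence. The same argument gives uniform ellipticity $c I\le A+D^2v\le CI$.

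Bound (b) then follows: the coefficients $W$ are uniformly elliptic with bounded $C^{0,\gamma}$ norm, and a contradiction/compactness argument yields the uniform $L^\infty$ bound for mean-zero solutions. Arzel\`a--Ascoli gives a limit operator of the same type, which has trivial kernel on mean-zero functions.

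I expect the main obstacle to be bound (a): making the $C^{2,\gamma}$ estimate uniform, in particular uniform strict convexity of the periodic solution, rather than the implicit function argument itself.
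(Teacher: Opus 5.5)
Your proposal is correct and follows essentially the same route as the paper: the implicit function theorem for $\det(A+D^2v)-(\det A)\rho$ viewed as a map into $C_0^{0,\gamma}$, invertibility of the divergence-free linearized operator via the strong maximum principle, Fredholm theory and Schauder estimates, and successive differentiation of the cell equation, controlled by the uniform inverse bound and the periodic $C^{2,\gamma}$ estimate. Your compactness argument usefully spells out the uniformity of the bound on $L^{-1}$, which the paper only asserts. One typo to fix: the compatibility identity should read $\int_{\mathbb T^n}\bigl(\det(A+D^2v)-(\det A)\rho\bigr)\,dy=\det A-\det A=0$.
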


\begin{proof}
The periodic $C^{2,\gamma}$ estimates and
\eqref{eq:measure-corrector-size} give
$\|v_{A,\rho}\|_{C^{2,\gamma}}\leq C$; see
\cite[Remark~0.6]{caffarelli2004liouville}.
Combining this bound with \eqref{eq:parametric-corrector} yields
$cI\leq A+D_y^2v_{A,\rho}\leq CI$.
The linearized Monge--Amp\`ere operator
\[
L_{A,\rho}\eta
=\bigl[\operatorname{cof}(A+D_y^2v_{A,\rho})\bigr]^{ij}\eta_{ij}
\]
is self-adjoint because its coefficient matrix is divergence-free.
By the strong maximum principle, its kernel on $C^{2,\gamma}$
consists exactly of constants. The Fredholm alternative and
Schauder estimates therefore give an isomorphism
$L_{A,\rho}:C_0^{2,\gamma}\to C_0^{0,\gamma}$ with a uniformly
bounded inverse.

For $\zeta\in C_0^{2,\gamma}$, \eqref{eq:periodic-cell-compatibility}
shows that $\det(A+D^2\zeta)-(\det A)\rho$ lies in $C_0^{0,\gamma}$.
Applying the implicit-function theorem at $(A,\rho,v_{A,\rho})$, we
obtain smooth dependence of $v_{A,\rho}$ on $(A,\rho)$.
Differentiating \eqref{eq:parametric-corrector} up to three times, we
obtain equations with the same operator $L_{A,\rho}$ and right-hand
sides involving parameter directions and parameter derivatives of
lower order. We then use the uniform inverse bound and the H\"older
product estimate to estimate these derivatives successively.
\end{proof}

\begin{proof}[Proof of Theorem~\ref{thm:main-result}]
We divide the proof into several steps.

\smallskip

\noindent \textbf{Step 1.}
It suffices to consider sufficiently small $\varepsilon$.
Indeed, the Alexandrov maximum principle gives a uniform bound for
$u^\varepsilon$, which yields the desired estimate for $\varepsilon$
bounded away from zero after increasing $C$.
Set
\[
A(x):=D^2u(x),
\qquad
\rho(x,y):=\frac{F(x,y)}{\overline F(x)},
\qquad
Z(x):=(A(x),\rho(x,\cdot))\in X_\gamma.
\]
The bounds on $F$ give $\lambda\leq\overline F\leq\Lambda$,
$c\leq\rho\leq C$, and
$\int_{\mathbb T^n}\rho(x,y)\,dy=1$.
Since $u$ is convex and $D^2u$ is bounded, the lower bound
$\det D^2u=\overline F\geq\lambda$ gives
\begin{equation}
\label{eq:effective-uniform-convexity}
cI\leq A(x)\leq CI
\quad (x\in\overline\Omega).
\end{equation}
Here $c,C>0$ depend only on $n,\lambda,\Lambda$, and
$\|u\|_{C^2(\overline\Omega)}$.
Let $E_\gamma:=\mathcal S^{n\times n}\times C^{0,\gamma}(\mathbb T^n)$,
equipped with the product norm
$\|(B,f)\|_{E_\gamma}:=|B|+\|f\|_{C^{0,\gamma}(\mathbb T^n)}$.
Then
\[
\|Z\|_{L^\infty(\Omega;E_\gamma)}
=\sup_{x\in\Omega}\left(|A(x)|
  +\|\rho(x,\cdot)\|_{C^{0,\gamma}(\mathbb T^n)}\right).
\]
For derivatives of parameter fields, we apply the $E_\gamma$ norm
componentwise.
Since $\overline F\geq\lambda$, the normalization $\rho=F/\overline F$
preserves the $C^{m,\alpha}$ regularity in $x$, with values in
$C^{0,\gamma}(\mathbb T^n)$, for $m=0,1$.

Using a bounded Whitney extension operator, we extend $A$ and the
$C_0^{0,\gamma}(\mathbb T^n)$-valued field $\rho-1$ in $C^{m,\alpha}$
to a neighborhood of $\overline\Omega$, so that $\rho$ retains its
normalization. After shrinking the neighborhood, $A$ and
$\rho$ remain uniformly positive. We then mollify in $x$ with a standard
nonnegative symmetric mollifier $\eta_\varepsilon(x)$ at scale
$\varepsilon$ and set
\[
A_\varepsilon:=A*\eta_\varepsilon,
\qquad
\rho_\varepsilon:=\rho*\eta_\varepsilon,
\qquad
Z_\varepsilon:=(A_\varepsilon,\rho_\varepsilon)=Z*\eta_\varepsilon.
\]
For small $\varepsilon$, we have $Z_\varepsilon:\overline\Omega\to X_\gamma$ and
\[
cI\leq A_\varepsilon\leq CI,
\qquad c\leq\rho_\varepsilon\leq C.
\]
We then denote the corresponding full solution on $\mathbb R^n$ and
its periodic part by
\[
w^\varepsilon(x,y):=w_{A_\varepsilon(x),\rho_\varepsilon(x,\cdot)}(y),
\qquad
v^\varepsilon(x,y):=v_{A_\varepsilon(x),\rho_\varepsilon(x,\cdot)}(y).
\]

\smallskip

\noindent \textbf{Step 2.}
Since $Z$ is $C^{0,\alpha}$ in $x$, standard mollification estimates give
\begin{equation}
\label{eq:mollified-hessian}
\|Z_\varepsilon-Z\|_{L^\infty(\Omega;E_\gamma)}
+\varepsilon\|DZ_\varepsilon\|_{L^\infty(\Omega;E_\gamma)}
+\varepsilon^2\|D^2Z_\varepsilon\|_{L^\infty(\Omega;E_\gamma)}
\leq C\varepsilon^\alpha.
\end{equation}

Set
\[
V^\varepsilon(x):=u(x)+\varepsilon^2v^\varepsilon(x,x/\varepsilon).
\]
Differentiating $V^\varepsilon$ twice and writing $y=x/\varepsilon$,
we obtain
\[
D_{ij}V^\varepsilon
=A_{ij}+(v^\varepsilon)_{y_i y_j}
+\varepsilon\bigl((v^\varepsilon)_{x_i y_j}
 +(v^\varepsilon)_{x_j y_i}\bigr)
+\varepsilon^2(v^\varepsilon)_{x_i x_j}.
\]
Since $D_y^2w^\varepsilon=A_\varepsilon+D_y^2v^\varepsilon$, we have
\[
D^2V^\varepsilon(x)
=D_y^2w^\varepsilon(x,x/\varepsilon)+E_\varepsilon(x),
\]
with
\[
\begin{aligned}
\|E_\varepsilon\|_{L^\infty(\Omega)}
&\leq\|A-A_\varepsilon\|_{L^\infty(\Omega)}
 +C\varepsilon\|D_xD_yv^\varepsilon\|_{L^\infty(\Omega\times\mathbb T^n)}
 +\varepsilon^2\|D_x^2v^\varepsilon\|_{L^\infty(\Omega\times\mathbb T^n)}\\
&\leq C\bigl(\varepsilon^\alpha+\varepsilon^{2\alpha}\bigr)\\
&\leq C\varepsilon^\alpha,
\end{aligned}
\]
where we used \eqref{eq:mollified-hessian} and the estimates
\[
\begin{aligned}
\|D_xD_yv^\varepsilon\|_{L^\infty(\Omega\times\mathbb T^n)}
&\leq\|D_xv^\varepsilon\|_{L^\infty(\Omega;C^{2,\gamma}(\mathbb T^n))}
 \leq C\|DZ_\varepsilon\|_{L^\infty(\Omega;E_\gamma)},\\
\|D_x^2v^\varepsilon\|_{L^\infty(\Omega\times\mathbb T^n)}
&\leq C\left(\|D^2Z_\varepsilon\|_{L^\infty(\Omega;E_\gamma)}
 +\|DZ_\varepsilon\|_{L^\infty(\Omega;E_\gamma)}^{\,2}\right),
\end{aligned}
\]
which follow from Lemma~\ref{lem:smooth-cell-dependence} and the chain rule.

In conclusion, for sufficiently small $\varepsilon$, we have
\[
\|V^\varepsilon-u\|_{L^\infty(\overline\Omega)}
\leq C\varepsilon^2,
\qquad
cI\leq D^2V^\varepsilon\leq CI.
\]
By the cell equation \eqref{eq:parametric-corrector},
\[
\det D_y^2w^\varepsilon(x,x/\varepsilon)
=(\det A_\varepsilon(x))\rho_\varepsilon(x,x/\varepsilon).
\]
Recalling that $F=(\det A)\rho$, we use the bound on $E_\varepsilon$,
\eqref{eq:mollified-hessian}, and the local Lipschitz continuity of
the determinant to obtain
\[
\left|\det D^2V^\varepsilon-F(x,x/\varepsilon)\right|
\leq C_0\varepsilon^\alpha.
\]
Since $\varepsilon^2\leq\varepsilon^\alpha$ for $0<\varepsilon\leq1$,
Proposition~\ref{prop:approximate-solution-principle}, applied with
$\beta=\alpha$, proves the case $m=0$.

\smallskip

\noindent  \textbf{Step 3.}
For each $x\in\overline\Omega$, let $\psi^\varepsilon(x,\cdot)$ be the
mean-zero solution of
\begin{equation}
\label{eq:third-corrector}
(W^\varepsilon)^{ij}(\psi^\varepsilon)_{y_i y_j}
=-2(W^\varepsilon)^{ij}(v^\varepsilon)_{x_i y_j}
\quad\text{in }\mathbb T^n,
\qquad
\int_{\mathbb T^n}\psi^\varepsilon(x,y)\,dy=0,
\end{equation}
where
\[
W^\varepsilon(x,y)
:=\operatorname{cof}D_y^2w^\varepsilon(x,y).
\]
Indeed, since $W^\varepsilon$ is divergence-free in $y$,
\[
\int_{\mathbb T^n}(W^\varepsilon)^{ij}
(v^\varepsilon)_{x_i y_j}\,dy=0.
\]
The uniform ellipticity of $W^\varepsilon$ and the Fredholm alternative
therefore give a unique mean-zero solution.

We next estimate the correctors and their derivatives in $x$.
The $C^{1,\alpha}$ regularity of $Z$ gives
\begin{equation}
\label{eq:c1alpha-mollification}
\begin{alignedat}{2}
\|Z_\varepsilon-Z\|_{L^\infty(\Omega;E_\gamma)}
&\leq C\varepsilon^{1+\alpha},\qquad&
\|DZ_\varepsilon-DZ\|_{L^\infty(\Omega;E_\gamma)}
&\leq C\varepsilon^\alpha,\\
\|D^2Z_\varepsilon\|_{L^\infty(\Omega;E_\gamma)}
&\leq C\varepsilon^{\alpha-1},\qquad&
\|D^3Z_\varepsilon\|_{L^\infty(\Omega;E_\gamma)}
&\leq C\varepsilon^{\alpha-2}.
\end{alignedat}
\end{equation}
The first estimate follows from the symmetry of $\eta_\varepsilon$.
Applying standard mollification estimates to $DZ\in C^{0,\alpha}$,
we obtain the other three.
Lemma~\ref{lem:smooth-cell-dependence} and the chain rule give
\[
\begin{alignedat}{2}
\|v^\varepsilon\|_{L^\infty(\Omega;C^{2,\gamma}(\mathbb T^n))}
&\leq C,\qquad&
\|D_xv^\varepsilon\|_{L^\infty(\Omega;C^{2,\gamma}(\mathbb T^n))}
&\leq C,\\
\|D_x^2v^\varepsilon\|_{L^\infty(\Omega;C^{2,\gamma}(\mathbb T^n))}
&\leq C\varepsilon^{\alpha-1},\qquad&
\|D_x^3v^\varepsilon\|_{L^\infty(\Omega;C^{2,\gamma}(\mathbb T^n))}
&\leq C\varepsilon^{\alpha-2}.
\end{alignedat}
\]
Since $W^\varepsilon=\operatorname{cof}D_y^2w^\varepsilon$, these
estimates also give
\[
\|W^\varepsilon\|_{L^\infty(\Omega;C^{0,\gamma}(\mathbb T^n))}
+\|D_xW^\varepsilon\|_{L^\infty(\Omega;C^{0,\gamma}(\mathbb T^n))}
+\varepsilon^{1-\alpha}
 \|D_x^2W^\varepsilon\|_{L^\infty(\Omega;C^{0,\gamma}(\mathbb T^n))}
\leq C.
\]
Applying Schauder estimates to \eqref{eq:third-corrector} and to the
equations obtained by differentiating it once and twice with respect
to $x$, we obtain
\[
\|\psi^\varepsilon\|_{L^\infty(\Omega;C^{2,\gamma}(\mathbb T^n))}
+\varepsilon^{1-\alpha}
 \|D_x\psi^\varepsilon\|_{L^\infty(\Omega;C^{2,\gamma}(\mathbb T^n))}
+\varepsilon^{2-\alpha}
 \|D_x^2\psi^\varepsilon\|_{L^\infty(\Omega;C^{2,\gamma}(\mathbb T^n))}
\leq C.
\]

Set
\[
\widetilde V^\varepsilon(x)
:=u(x)+\varepsilon^2v^\varepsilon(x,x/\varepsilon)
       +\varepsilon^3\psi^\varepsilon(x,x/\varepsilon).
\]
Differentiating twice, we obtain
\begin{equation}
\label{eq:third-order-hessian}
D^2\widetilde V^\varepsilon(x)
=D_y^2w^\varepsilon(x,x/\varepsilon)
 +\varepsilon N_\varepsilon(x,x/\varepsilon)
 +R_\varepsilon(x,x/\varepsilon),
\end{equation}
where
\[
(N_\varepsilon)_{ij}(x,y)
:=(v^\varepsilon)_{x_i y_j}+(v^\varepsilon)_{x_j y_i}
  +(\psi^\varepsilon)_{y_i y_j}
\]
and the remainder term is given by
\[
(R_\varepsilon)_{ij}(x,y)
:=(A-A_\varepsilon)_{ij}
 +\varepsilon^2(v^\varepsilon)_{x_i x_j}
 +\varepsilon^2\bigl((\psi^\varepsilon)_{x_i y_j}
  +(\psi^\varepsilon)_{x_j y_i}\bigr)
 +\varepsilon^3(\psi^\varepsilon)_{x_i x_j}.
\]
Substituting the preceding derivative bounds, we obtain
\[
\|R_\varepsilon\|_{L^\infty(\Omega\times\mathbb T^n)}
\leq \|A-A_\varepsilon\|_{L^\infty(\Omega)}
 +C\bigl(\varepsilon^2\varepsilon^{\alpha-1}
 +\varepsilon^3\varepsilon^{\alpha-2}\bigr)
\leq C\varepsilon^{1+\alpha}.
\]
Moreover, $N_\varepsilon$ and $D_y^2w^\varepsilon$ are uniformly bounded, and
$D_y^2w^\varepsilon$ is uniformly positive definite. Together with
\eqref{eq:third-order-hessian} and the bound on $R_\varepsilon$, this gives
\[
cI\leq D^2\widetilde V^\varepsilon\leq CI
\]
for sufficiently small $\varepsilon$. By \eqref{eq:third-corrector},
\[
(W^\varepsilon)^{ij}(N_\varepsilon)_{ij}=0
\qquad\text{in }\Omega\times\mathbb T^n.
\]
Thus the term of order $\varepsilon$ in the determinant expansion
vanishes. Using the cell equation \eqref{eq:parametric-corrector},
\eqref{eq:c1alpha-mollification}, and the bound on $R_\varepsilon$,
we obtain
\[
\begin{aligned}
\left|\det D^2\widetilde V^\varepsilon-F(x,x/\varepsilon)\right|
&\leq C\bigl(\varepsilon^2+\varepsilon^{1+\alpha}\bigr)
 +C\|(\det A_\varepsilon)\rho_\varepsilon
       -(\det A)\rho\|_{L^\infty(\Omega\times\mathbb T^n)}\\
&\leq C\varepsilon^{1+\alpha}.
\end{aligned}
\]
Finally, the uniform bounds for $v^\varepsilon$ and $\psi^\varepsilon$ give
\[
\|\widetilde V^\varepsilon-u\|_{L^\infty(\overline\Omega)}
\leq C\varepsilon^2.
\]
Since $\varepsilon^2\leq\varepsilon^{1+\alpha}$ for $0<\varepsilon\leq1$,
Proposition~\ref{prop:approximate-solution-principle} applies with
$\beta=1+\alpha$ and completes the proof.
\end{proof}

\begin{Example}
\label{ex:fixed-data-sharpness}
Let $\Omega=(0,1)$, $\varphi=0$, and consider
\[
f(x):=a_0+\sum_{\substack{k\geq1\\ k\text{ even}}}
\bigl(a_k\cos(2\pi kx)+b_k\sin(2\pi kx)\bigr),
\]
where the coefficients are absolutely summable and $f>0$. Set
\[
g(y):=1+\frac12\cos(2\pi y),
\qquad
F(x,y):=f(x)g(y).
\]
For each even integer $N\geq2$, let $u_N$ denote the oscillatory solution
for $\varepsilon=N^{-1}$. Since $\overline F=f$, the oscillatory and
effective solutions satisfy
\[
u_N''=f(x)g(Nx),
\qquad u''=f(x)\quad\text{in }(0,1),
\qquad
u_N=u=0\quad\text{on }\{0,1\}.
\]
Multiplying the series for $f$ by $g(Nx)$, we obtain
\[
f(x)g(Nx)=a_0+\frac{a_N}{4}
+\sum_{\substack{k\geq1\\ k\text{ even}}}
\bigl(a_{N,k}\cos(2\pi kx)+b_{N,k}\sin(2\pi kx)\bigr),
\]
where, with $b_0:=0$,
\[
a_{N,k}=a_k+\frac14\bigl(a_{|k-N|}+a_{k+N}+a_0\delta_{kN}\bigr),
\qquad
b_{N,k}=b_k+\frac14\bigl(\operatorname{sgn}(k-N)b_{|k-N|}+b_{k+N}\bigr).
\]
Here $\delta_{kN}$ is the Kronecker delta.
Integrating twice and imposing the boundary conditions, we obtain
\[
\begin{aligned}
u_N(x)&=\frac12\left(a_0+\frac{a_N}{4}\right)x(x-1)
+\sum_{\substack{k\geq1\\ k\text{ even}}}
\frac{a_{N,k}\bigl(1-\cos(2\pi kx)\bigr)
      -b_{N,k}\sin(2\pi kx)}{4\pi^2k^2},\\
u(x)&=\frac{a_0}{2}x(x-1)
+\sum_{\substack{k\geq1\\ k\text{ even}}}
\frac{a_k\bigl(1-\cos(2\pi kx)\bigr)
      -b_k\sin(2\pi kx)}{4\pi^2k^2}.
\end{aligned}
\]
At $x=1/2$, both sums vanish. Thus $u(1/2)=-a_0/8$ and
\begin{equation}
\label{eq:fourier-midpoint}
u_N\left(\frac12\right)=-\frac{a_0}{8}-\frac{a_N}{32},
\qquad
(u_N-u)\left(\frac12\right)=-\frac{a_N}{32}.
\end{equation}

Now fix $s\in(0,2)\setminus\{1\}$ and set $N_j:=2^{j+1}$.
Take $a_0=1$, $b_k=0$, and
\[
a_k:=
\begin{cases}
\eta N_j^{-s},&k=N_j,\quad j\geq1,\\
0,&\text{otherwise}
\end{cases}.
\]
Denote the resulting functions by $f_s$, $F_s$, $u_s$, and $u_{s,N}$,
with $\eta>0$ chosen so that $f_s\geq1/2$.
The density $F_s$ is independent of $\varepsilon$.
A dyadic splitting shows that
\[
\begin{cases}
f_s\in C^{0,s}([0,1]),\quad u_s\in C^{2,s}([0,1]),
&0<s<1,\\
f_s\in C^{1,s-1}([0,1]),\quad u_s\in C^{3,s-1}([0,1]),
&1<s<2
\end{cases}.
\]
For $\varepsilon_j:=N_j^{-1}$, \eqref{eq:fourier-midpoint} gives
\[
(u_{s,N_j}-u_s)\left(\frac12\right)
=-\frac{\eta}{32}\varepsilon_j^s.
\]
Taking $s=\alpha$ and $s=1+\alpha$, respectively, proves the sharpness
of the two rates in Theorem~\ref{thm:main-result} for $0<\alpha<1$.
Here $F_s/\overline F_s=g$ is independent of $x$, so the only varying
cell parameter is $A=u_s''=\overline F_s=f_s$.

For $s=1,2$, replace $a_{N_j}$ by $\eta j^{-2}N_j^{-s}$.
Then $f_s\in C^{s-1,1}([0,1])$ and $u_s\in C^{s+1,1}([0,1])$.
Applying \eqref{eq:fourier-midpoint} again gives
\[
\left|(u_{s,N_j}-u_s)\left(\frac12\right)\right|
=\frac{\eta}{32j^2}\varepsilon_j^s.
\]
Since $j^{-2}\varepsilon_j^{-\theta}\to\infty$ for every $\theta>0$,
neither endpoint admits a larger algebraic exponent.
\end{Example}

\section{The measure-valued setting}
\label{sec:measure-valued-setting}

In this section we prove Theorem~\ref{thm:weighted-measure-holder-rate}.
We first recall an almost H\"older estimate for the linearized equation,
which we use to prove Lipschitz dependence of periodic correctors on the
Hessian and the microscopic weight.

Throughout this section, $\mu$ denotes a normalized nonnegative locally
finite periodic Borel measure. We use the notation $v_A$ and $w_A$ from
Section~\ref{sec:periodic-cell-problems}.

\subsection{An estimate for the linearized equation}
\label{sec:linearized-periodic-estimate}

Let $\Phi$ be a convex function, $x_0$ a point in its domain, and
$p\in\partial\Phi(x_0)$. For $t>0$, the section of $\Phi$ at $x_0$ of
height $t$ is defined by
\[
S_{t,p}^{\Phi}(x_0)
:=\left\{x:\Phi(x)<\Phi(x_0)+p\cdot(x-x_0)+t\right\}.
\]
For a lattice $\Gamma$ with fundamental domain $D(\Gamma)$, we measure
the relative size of $\Omega$ by
\[
\kappa_\Gamma(\Omega)
:=\sup\{t>0:y+tD(\Gamma)\subset\Omega
                \text{ for some }y\in\mathbb R^n\}.
\]

For a smooth potential $\phi$ with $D^2\phi>0$, the linearized
Monge--Amp\`ere operator is
\[
L_\phi f:=\bigl[\operatorname{cof}(D^2\phi)\bigr]^{ij}f_{ij}.
\]

The next proposition follows from
\cite[Propositions~2.4 and~2.17]{jin2025liouville}.
When $\kappa_\Gamma(\Omega)$ is sufficiently large,
\cite[Proposition~2.4]{jin2025liouville} gives two-sided ball comparability
for the interior sections, with constants depending only on $n$.
These are the only geometric bounds used in the proof of
\cite[Proposition~2.17]{jin2025liouville}, so the same argument applies.

\begin{Proposition}
\label{prop:jltt-paired-holder}
Let $\Gamma$ be a lattice with fundamental domain $D(\Gamma)$, and let
$f$ be a smooth positive $\Gamma$-periodic function satisfying
\[
\int_{D(\Gamma)}f(y)\,dy=|D(\Gamma)|.
\]
For $i=1,2$, let $\Omega_i$ be a bounded convex domain with
$B_1\subset\Omega_i\subset B_n$, and let
$\phi_i\in C^\infty(\Omega_i)\cap C(\overline{\Omega_i})$ satisfy
$D^2\phi_i>0$ and
\[
\det D^2\phi_i=f\quad\text{in }\Omega_i,
\qquad
\phi_i=0\quad\text{on }\partial\Omega_i.
\]
Let $\Lambda_*=\Lambda_*(n)>1$ be sufficiently large, and assume that
$\kappa_\Gamma(\Omega_i)\geq\Lambda_*^2$ for $i=1,2$.
Let $w\in C^2(\Omega_1\cup\Omega_2)$ satisfy
\[
L_{\phi_1}w\leq0\quad\text{in }\Omega_1,
\qquad
L_{\phi_2}w\geq0\quad\text{in }\Omega_2.
\]
Then there exist
$\alpha_0\in(0,1)$ and $C>0$, depending only on $n$, such that
\begin{equation}
\label{eq:jltt-paired-holder}
|w(x)-w(y)|
\leq C\left(|x-y|^{\alpha_0}
 +\kappa^{-\alpha_0/4}\right)
\|w\|_{L^\infty(B_{1/2})},
\qquad x,y\in B_{1/4},
\end{equation}
where $\kappa:=\kappa_\Gamma(B_1)$.
\end{Proposition}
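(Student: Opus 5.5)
The plan is to reduce Proposition~\ref{prop:jltt-paired-holder} to \cite[Proposition~2.17]{jin2025liouville}. The only input that proof needs beyond the equations themselves is two-sided comparability of interior sections of $\phi_1$ and $\phi_2$ with balls. We supply that input from \cite[Proposition~2.4]{jin2025liouville}.

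\textbf{Step 1: section geometry.} The normalization $\int_{D(\Gamma)}f=|D(\Gamma)|$ says that $f$ has average one over a period cell. The hypothesis $\kappa_\Gamma(\Omega_i)\geq\Lambda_*^2$ says that $\Omega_i$ contains many period cells. Under these conditions \cite[Proposition~2.4]{jin2025liouville} applies to each $\phi_i$. It gives the following: for $x_0\in B_{1/2}$ and heights $t$ in a range $[t_*,c_n]$, with $t_*\to0$ as $\kappa$ grows, the section satisfies
\[
B_{c\sqrt t}(x_0)\subset S^{\phi_i}_{t,D\phi_i(x_0)}(x_0)\subset B_{C\sqrt t}(x_0).
\]
Here $c,C$ depend only on $n$. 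I will check that $t_*\lesssim\kappa^{-1/2}$ or a similar power, since this fixes the exponent $\kappa^{-\alpha_0/4}$ in the conclusion. Because $B_1\subset\Omega_i$, we have $\kappa=\kappa_\Gamma(B_1)\leq\kappa_\Gamma(\Omega_i)$ up to dimensional factors. Note that $\kappa$ is computed on $B_1$, which is common to both domains.

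\textbf{Step 2: one-sided decay.} Run the argument of \cite[Proposition~2.17]{jin2025liouville} on each side. On sections of $\phi_1$, the supersolution $w$ satisfies an $L^\varepsilon$ (weak Harnack) estimate. On sections of $\phi_2$, the subsolution $w$ satisfies a local maximum principle. These are Caffarelli--Guti\'errez type estimates, and they use only the doubling and engulfing properties of sections. Both properties follow from the ball comparability of Step~1 at all scales $t\geq t_*$.

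\textbf{Step 3: pairing.} Combine the two estimates at a point $x_0$ and scale $r$ with $r^2\geq t_*$. Both sections are comparable to $B_r(x_0)$, so the supersolution bound controls $w$ from below and the subsolution bound controls it from above on the same ball. This gives an oscillation decay
\[
\operatorname{osc}_{B_{\theta r}(x_0)}w\leq(1-\delta)\operatorname{osc}_{B_r(x_0)}w,
\]
with $\theta,\delta$ depending only on $n$. Iterate this down to the scale $r_*\sim t_*^{1/2}$. This yields
\[
|w(x)-w(y)|\leq C\bigl(|x-y|^{\alpha_0}+r_*^{\alpha_0}\bigr)\|w\|_{L^\infty(B_{1/2})}.
\]
Substituting $r_*$ in terms of $\kappa$ gives \eqref{eq:jltt-paired-holder}.

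\textbf{Main obstacle.} The delicate point is Step~3. The pairing needs the sections of $\phi_1$ and $\phi_2$ at the same center to be mutually nested with uniform constants, even though $\phi_1\neq\phi_2$ and $\Omega_1\neq\Omega_2$. I expect this to follow directly from the ball comparability in Step~1, with no relation between $\phi_1$ and $\phi_2$ required. I would verify it by tracking that the proof of \cite[Proposition~2.17]{jin2025liouville} uses nothing else.
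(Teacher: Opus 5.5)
Your proposal is essentially the paper's argument. The paper also takes from \cite[Proposition~2.4]{jin2025liouville} the two-sided ball comparability of interior sections of each $\phi_i$ separately (constants depending only on $n$ once $\kappa_\Gamma(\Omega_i)$ is large), observes that this is the only geometric input in the proof of \cite[Proposition~2.17]{jin2025liouville}, and concludes that the argument applies to the pair $\phi_1,\phi_2$ with no relation between them required; just note that your Steps~2--3 go beyond what the paper writes, and since the classical Caffarelli--Guti\'errez covering uses sections at arbitrarily small scales, the truncation at scale $t_*$ (which produces the $\kappa^{-\alpha_0/4}$ error) must come from the cited large-scale argument rather than directly from the classical theory.
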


\subsection{Stability of periodic correctors}
\label{sec:corrector-parameter-stability}

\begin{Theorem}
\label{thm:measure-corrector-lipschitz}
Let $\mathcal K\Subset\mathcal S_+^{n\times n}$, and fix
$0<\lambda_0\leq\Lambda_0$. Let $\mu$ be a normalized nonnegative locally
finite $\mathbb Z^n$-periodic Borel measure.
For $i=1,2$, let $A_i\in\mathcal K$ and let $\rho_i$ be a periodic
Borel function satisfying
\[
\lambda_0\leq\rho_i\leq\Lambda_0\quad\mu\text{-a.e.},
\qquad
\int_{\mathbb T^n}\rho_i\,d\mu=1.
\]
Then
\begin{equation}
\label{eq:weighted-corrector-lipschitz}
\left\|v_{A_1,\rho_1\mu}-v_{A_2,\rho_2\mu}\right\|_{L^\infty(\mathbb T^n)}
\leq C
\left(|A_1-A_2|+\|\rho_1-\rho_2\|_{L^\infty(d\mu)}\right).
\end{equation}
Here $C$ depends only on $n$, $\mathcal K$, $\lambda_0$, and $\Lambda_0$.
\end{Theorem}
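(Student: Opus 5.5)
By Lemma~\ref{lem:smooth-measure-approximation} and the uniform convergence $v_{A,\nu_k}\to v_{A,\nu}$ recorded after it, we reduce to smooth positive data. We mollify $\mu$ by a strictly positive approximate identity. We replace $\rho_i\mu$ by $(\rho_i\mu)*\eta_k$, which is a smooth positive density with bounds $\lambda_0 f_k\le(\rho_i\mu)*\eta_k\le\Lambda_0 f_k$, and whose $L^\infty(d\mu)$ difference is controlled by $\|\rho_1-\rho_2\|_{L^\infty(d\mu)}f_k$. So it suffices to prove \eqref{eq:weighted-corrector-lipschitz} for $\nu_i=g_i\,dy$ with $g_i=\rho_i f$ smooth, where $f$ is a smooth positive periodic density of mass one, with a constant independent of $f$. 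We then interpolate along a smooth path $(A_t,\rho_t)$, $t\in[0,1]$, with $A_t=(1-t)A_1+tA_2$ and $\rho_t=(1-t)\rho_1+t\rho_2$. The mass normalization $\int\rho_t f=1$ is preserved, and $A_t$ stays in a compact convex hull of $\mathcal K$ (we enlarge $\mathcal K$ if needed). By Lemma~\ref{lem:smooth-cell-dependence} (smooth data), $t\mapsto v_t:=v_{A_t,\rho_t f}$ is differentiable. Writing $\dot v=\partial_t v_t$ and $W_t=\operatorname{cof}(A_t+D^2v_t)$, differentiation of the cell equation gives
\[
W_t^{ij}\bigl(\dot A_{ij}+\dot v_{ij}\bigr)=\tfrac{d}{dt}\bigl(\det A_t\bigr)\rho_t f+(\det A_t)\dot\rho f .
\]
It then suffices to show $\|\dot v\|_{L^\infty}\le C(|\dot A|+\|\dot\rho\|_{L^\infty})$, uniformly in $f$.

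Set $\phi=w_t=q_{A_t}+v_t$, so $\det D^2\phi=(\det A_t)\rho_t f$. The equation above reads $L_\phi(\dot v+q_{\dot A})=h$ with $|h|\le C(|\dot A|+\|\dot\rho\|)\det D^2\phi$, since $\det D^2\phi=(\det A_t)\rho_tf$ and $\rho_t\ge\lambda_0$. Now $L_\phi\phi=n\det D^2\phi$, so with $M=C(|\dot A|+\|\dot\rho\|)$ the functions $\dot v+q_{\dot A}\pm \frac{M}{n}\phi$ are respectively a subsolution and a supersolution of $L_\phi$. This is the key trick: the right-hand side, which is only a measure in the limit, is absorbed by the potential itself. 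Since $\dot v$ is periodic with mean zero, we need only an oscillation bound, and we obtain it by a blow-down. We rescale a large section $S$ of $\phi$ at height $R^2$ (comparable to an ellipsoid $\{q_{A_t}<R^2\}$ by Proposition~2.4 of \cite{jin2025liouville}) to the normalized domain $B_1\subset\Omega\subset B_n$. In the rescaled picture the lattice becomes $\Gamma=R^{-1}T\mathbb Z^n$, so $\kappa_\Gamma\sim R$ is large, while the density is $\Gamma$-periodic with the correct mass normalization after dividing by the constant $\det A_t\cdot$const. We apply Proposition~\ref{prop:jltt-paired-holder} with $\phi_1=\phi_2$ equal to the normalized potential and $w$ the rescaled version of $\dot v+q_{\dot A}+\frac{M}{n}\phi$ (subsolution) and, separately, its counterpart with $-\frac{M}{n}\phi$. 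Then on $B_{1/4}$ we get the oscillation of $w$ up to $C(r^{\alpha_0}+R^{-\alpha_0/4})\|w\|_{L^\infty(B_{1/2})}$. Scaling back, $q_{\dot A}$ and $\phi$ contribute $O((|\dot A|+M)R^2)$, and $\dot v$ contributes $\|\dot v\|_\infty$. Taking points in $B_{1/4}$ at distance $\sim R^{-1}$ (a unit cell) gives
\[
\operatorname{osc}\dot v\le C(R^{-\alpha_0}+R^{-\alpha_0/4})\|\dot v\|_\infty+C(|\dot A|+M)R^2 + (\text{osc of }q_{\dot A}\pm\tfrac Mn\phi\text{ over a cell, }O(MR)).
\]
Since $\|\dot v\|_\infty\le\operatorname{osc}\dot v$ (mean zero), choosing $R=R(n,\mathcal K,\lambda_0,\Lambda_0)$ large absorbs the first term, yielding $\|\dot v\|_\infty\le C(|\dot A|+\|\dot\rho\|)$. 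Integrating in $t$ gives \eqref{eq:weighted-corrector-lipschitz}.

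The main obstacle is this absorption step: making sure the Hölder estimate of Proposition~\ref{prop:jltt-paired-holder} is applied at a scale where the quadratic terms $q_{\dot A}$ and $\phi$ are not too large compared with $\|\dot v\|$ while the decay factor is $<1/2$. The subtle point is that the estimate controls oscillation of the combined function $\dot v+q_{\dot A}\pm\frac Mn\phi$. So I would first subtract the affine part of $\phi$ at the section center and use that, on a cell of size $1$ inside a section of height $R^2$, the quadratic parts vary by at most $CR$. I would then also use the estimate at two points differing by a lattice vector, where $\dot v$ repeats, to isolate the quadratic contribution exactly. A second point to check is that the section normalization constants depend only on $n,\mathcal K,\lambda_0,\Lambda_0$ and not on $f$, which is exactly what the $\mu$-independent geometry of \cite{jin2025liouville} provides.
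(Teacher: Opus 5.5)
The step that fails as written is the application of Proposition~\ref{prop:jltt-paired-holder}. The size of the quadratic terms, which you flag as the main obstacle, is harmless: those terms only cost $C(|\dot A|+M)R^2$ once $R$ is fixed. The proposition requires one function $w$ with $L_{\phi_1}w\le0$ \emph{and} $L_{\phi_2}w\ge0$. For $\phi_1=\phi_2=\phi$ this means $L_\phi w=0$. Each of $\dot v+q_{\dot A}\pm\tfrac Mn\phi$ satisfies only one of these inequalities. So applying the proposition ``separately'' to the subsolution and to the supersolution is outside its hypotheses. Moreover, no estimate of that form holds for one-sided functions: a bounded subharmonic function, e.g.\ a truncated and normalized fundamental solution, can drop by an amount of order one across an arbitrarily small scale. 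Your oscillation inequality for $\dot v$ is therefore not justified by what you wrote.

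The repair is short. It uses your identity $L_\phi\phi=n\det D^2\phi$ as a barrier in a Dirichlet problem rather than as a shift.
Work in the normalized section $\Omega=\{\phi^R<5/4\}$, with $\phi^R(x)=R^{-2}\phi(RTx)$ and $T=A_t^{-1/2}$. Let $U$ be the same rescaling of $\dot v+q_{\dot A}$, so that $|L_{\phi^R}U|\le M\det D^2\phi^R$. Let $g$ solve $L_{\phi^R}g=L_{\phi^R}U$ in $\Omega$ with $g=0$ on $\partial\Omega$. The functions $g\pm\tfrac Mn(\phi^R-\tfrac54)$ have $L_{\phi^R}$ of a sign and vanish on $\partial\Omega$, so the maximum principle gives $|g|\le\tfrac Mn(\tfrac54-\phi^R)\le CM$.
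Then $L_{\phi^R}(U-g)=0$, so the proposition with $\phi_1=\phi_2$ applies to $U-g$. The term $g$ adds only $O(MR^2)$ to the oscillation of $\dot v$ over a unit cell in the original scale. Your absorption step and the integration in $t$ then go through, with constants independent of $f$.

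With this fix, your argument is a valid alternative to the paper's, which never differentiates the corrector. For the $A$-variation, the paper applies the genuinely paired form of the proposition to $w_1^R-w_2^R$. By concavity of $(\det)^{1/n}$, this difference is an $L_{w_1^R}$-supersolution and an $L_{w_2^R}$-subsolution. For the $\rho$-variation, the paper inserts a Dirichlet replacement $\chi$ with the same density as $w_1^R$. It controls $\chi$ by comparison with $(1\pm\eta)^{1/n}$-rescalings of $w_2^R$, which is the nonlinear counterpart of your barrier. Your route needs only the single-potential case of the proposition and treats both parameters at once. The cost is using the qualitative differentiability from Lemma~\ref{lem:smooth-cell-dependence} on the smooth approximations.
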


\begin{proof}
By the approximation in Section~\ref{sec:measure-approximation}, we may assume that
$d\mu=f\,dy$, with $f,\rho_1,\rho_2$ smooth and positive.
In dimension one, the cell equation is linear and the estimate follows
by direct integration; henceforth we assume that $n\geq2$.

\smallskip

\noindent \textbf{Step 1.}
We first show that
\begin{equation}
\label{eq:measure-corrector-lipschitz}
\left\|v_{A_1,\mu}-v_{A_2,\mu}\right\|_{L^\infty(\mathbb T^n)}
\leq C_{\mathcal K}|A_1-A_2|.
\end{equation}
Throughout this step, all constants, as well as the lower bounds on $R$,
depend only on $n$ and $\mathcal K$.
For simplicity, we assume that $\det A_1=\det A_2=1$.
The general case follows by applying the estimate to
$(\det A_i)^{-1/n}A_i$ and using $v_{tA,\mu}=t\,v_{A,\mu}$ together
with \eqref{eq:measure-corrector-size}.
Moreover, in view of \eqref{eq:measure-corrector-size}, it suffices to consider the case
$|A_1-A_2|<\delta_0$, where we choose
$\delta_0=\delta_0(n,\mathcal K)>0$ sufficiently small so that
\begin{equation}
\label{eq:close-matrix-comparability}
\frac{3}{4}I\leq A_1^{-1/2}A_2A_1^{-1/2}\leq\frac{5}{4}I.
\end{equation}

Let us write
\[
v_i:=v_{A_i,\mu},
\qquad
w_i:=w_{A_i,\mu}=q_{A_i}+v_i,
\qquad
M:=\operatorname{osc}_{\mathbb T^n}(v_1-v_2).
\]
Let $\alpha_0$ be the exponent in
Proposition~\ref{prop:jltt-paired-holder} and let $C_{\mathcal K}$ be
sufficiently large. We claim that there exists
$R_0=R_0(n,\mathcal K)$ such that, for every $R\geq R_0$,
\begin{equation}
\label{eq:corrector-almost-holder}
|(w_1-w_2)(x)-(w_1-w_2)(\tilde x)|
\leq\theta_R\left(M+C_{\mathcal K}|A_1-A_2|R^2\right),
\qquad x,\tilde x\in D_0,
\end{equation}
where
\begin{equation}
\label{eq:corrector-theta}
D_0:=[-1/2,1/2]^n,
\qquad
\theta_R:=C_{\mathcal K}R^{-\alpha_0/4},
\qquad R\geq1.
\end{equation}
Once \eqref{eq:corrector-almost-holder} is established,
\eqref{eq:measure-corrector-lipschitz} follows. Indeed, let
$x_+,x_-\in D_0$ be points where $v_1-v_2$ attains its maximum and
minimum, respectively. Applying \eqref{eq:corrector-almost-holder} at
these points with $R=R(n,\mathcal K)\geq R_0$ chosen so that
$\theta_R\leq1/2$, and using
$|q_{A_1-A_2}(x_+)-q_{A_1-A_2}(x_-)|\leq C_n|A_1-A_2|$,
we obtain
\[
M\leq C_{\mathcal K}|A_1-A_2|+\frac12M.
\]
Since $v_1-v_2$ has mean zero, this gives
$\|v_1-v_2\|_{L^\infty(\mathbb T^n)}\leq M\leq C_{\mathcal K}|A_1-A_2|$.

It remains to prove \eqref{eq:corrector-almost-holder}.
For $R\geq R_0$, set
\[
T:=A_1^{-1/2},
\qquad
w_i^R(x):=R^{-2}w_i(RTx),
\qquad
\Omega_i^R:=\{x:w_i^R(x)<5/4\}.
\]
By \eqref{eq:measure-corrector-size},
\[
w_i^R(x)=\frac12x\cdot TA_iTx+O_{\mathcal K}(R^{-2}),
\qquad x\in\mathbb R^n,\quad i=1,2.
\]
For the lattice $\Gamma_R:=R^{-1}A_1^{1/2}\mathbb Z^n$, we have
$\kappa_{\Gamma_R}(B_1)\simeq_{\mathcal K}R$.
Combining these estimates with the bounds
$\frac34I\leq TA_iT\leq\frac54I$ from
\eqref{eq:close-matrix-comparability}, we obtain, for sufficiently
large $R_0$ and all $R\geq R_0$,
\[
B_1\subset\Omega_i^R\subset B_n,
\qquad
\kappa_{\Gamma_R}(\Omega_i^R)\geq\kappa_{\Gamma_R}(B_1)
\geq c_{\mathcal K}R\geq\Lambda_*^2,
\qquad i=1,2.
\]

Since $\det A_i=\det T=1$, the functions $w_i^R$ satisfy
\[
\det D^2w_i^R=f(RTx)\quad\text{in }\Omega_i^R,
\qquad
w_i^R=5/4\quad\text{on }\partial\Omega_i^R,
\qquad i=1,2.
\]
We note that
\[
w_1^R(x)-w_2^R(x)
=\frac12x\cdot T(A_1-A_2)Tx
 +R^{-2}(v_1-v_2)(RTx),
\]
and recalling that $M=\operatorname{osc}_{\mathbb T^n}(v_1-v_2)$, we obtain
\[
\|w_1^R-w_2^R\|_{L^\infty(B_{1/2})}
\leq C_{\mathcal K}|A_1-A_2|+\frac{M}{R^2}.
\]

By the concavity of $N\mapsto(\det N)^{1/n}$, we have
\[
L_{w_1^R}(w_1^R-w_2^R)\leq0\quad\text{in }\Omega_1^R,
\qquad
L_{w_2^R}(w_1^R-w_2^R)\geq0\quad\text{in }\Omega_2^R.
\]
Fix $x,\tilde x\in D_0$ and set
\[
x_R:=R^{-1}T^{-1}x,
\qquad
\tilde x_R:=R^{-1}T^{-1}\tilde x.
\]
As $R_0$ is sufficiently large, we have $x_R,\tilde x_R\in B_{1/4}$ and
$|x_R-\tilde x_R|\leq C_{\mathcal K}R^{-1}$.
After subtracting the boundary constants from the potentials, we apply
Proposition~\ref{prop:jltt-paired-holder} to $w_1^R-w_2^R$ with
$\kappa=\kappa_{\Gamma_R}(B_1)\simeq_{\mathcal K}R$.
Using the $L^\infty(B_{1/2})$ estimate above, we obtain
\[
\begin{aligned}
\frac1{R^2}
|(w_1-w_2)(x)-(w_1-w_2)(\tilde x)|
&=|(w_1^R-w_2^R)(x_R)-(w_1^R-w_2^R)(\tilde x_R)|\\
&\leq\theta_R
\left(C_{\mathcal K}|A_1-A_2|+\frac{M}{R^2}\right),
\end{aligned}
\]
which proves \eqref{eq:corrector-almost-holder}.

\smallskip
\noindent\textbf{Step 2.}
We now fix $A_0\in\mathcal K$ and prove that
\begin{equation}
\label{eq:weighted-profile-lipschitz}
\left\|v_{A_0,\rho_1\mu}-v_{A_0,\rho_2\mu}\right\|_{L^\infty(\mathbb T^n)}
\leq C\|\rho_1-\rho_2\|_{L^\infty(d\mu)},
\end{equation}
with $C$ independent of $A_0$. For simplicity, we again assume that
$\det A_0=1$.
In view of \eqref{eq:measure-corrector-size}, we may assume that
$\eta:=\lambda_0^{-1}\|\rho_1-\rho_2\|_{L^\infty(d\mu)}<1/2$.
Since $\rho_2\geq\lambda_0$, we then have
\begin{equation}
\label{eq:weighted-profile-measure-order}
(1-\eta)\rho_2\mu
\leq\rho_1\mu
\leq(1+\eta)\rho_2\mu.
\end{equation}

Write $f_i:=\rho_i f$ and
\[
v_i:=v_{A_0,f_i},
\qquad
w_i:=w_{A_0,f_i}=q_{A_0}+v_i,
\qquad
M:=\operatorname{osc}_{\mathbb T^n}(v_1-v_2).
\]
We set
\[
\begin{alignedat}{2}
T&:=A_0^{-1/2},\qquad&
\Gamma_R&:=R^{-1}A_0^{1/2}\mathbb Z^n,\\
w_i^R(x)&:=R^{-2}w_i(RTx),\qquad&
f_{i,R}(x)&:=f_i(RTx),
\end{alignedat}
\]
and define
\[
\Omega_1:=\{x:w_1^R(x)<5/4\},
\qquad
\Omega_2:=\{x:w_2^R(x)<3/2\}.
\]
As in Step~1, for sufficiently large $R$, we have
\[
B_1\subset\Omega_1\Subset\Omega_2\subset B_n,
\qquad
\kappa_{\Gamma_R}(\Omega_i)\geq\kappa_{\Gamma_R}(B_1)
\geq c_{\mathcal K}R\geq\Lambda_*^2,
\qquad i=1,2,
\]
and $w_i^R$ satisfy $\det D^2w_i^R=f_{i,R}$.

Let $\chi$ be the convex solution of
\[
\det D^2\chi=f_{1,R}
\quad\text{in }\Omega_2,
\qquad
\chi=3/2
\quad\text{on }\partial\Omega_2.
\]
The classical interior regularity theory for the Monge--Amp\`ere
equation \cite{caffarelli1990interior} implies that
$\chi\in C^\infty(\Omega_2)$ and $D^2\chi>0$ in $\Omega_2$.
By \eqref{eq:weighted-profile-measure-order}, we have
$(1-\eta)f_{2,R}\leq f_{1,R}\leq(1+\eta)f_{2,R}$.
The comparison principle therefore gives
\[
\frac32+(1+\eta)^{1/n}\bigl(w_2^R-\tfrac32\bigr)
\leq\chi
\leq\frac32+(1-\eta)^{1/n}\bigl(w_2^R-\tfrac32\bigr).
\]
Using $|(1\pm\eta)^{1/n}-1|\leq C\eta$ and
$-2\leq w_2^R-3/2\leq0$ in $\Omega_2$, we obtain
\begin{equation}
\label{eq:weighted-dirichlet-replacement}
\|\chi-w_2^R\|_{L^\infty(\Omega_2)}
\leq C\eta.
\end{equation}
Note that
\[
w_1^R(x)-w_2^R(x)=R^{-2}(v_1-v_2)(RTx).
\]
Together with \eqref{eq:weighted-dirichlet-replacement}, this gives
\[
\|w_1^R-\chi\|_{L^\infty(B_{1/2})}
\leq C\eta+\frac{M}{R^2}.
\]

The functions $w_1^R$ and $\chi$ satisfy the same equation in
$\Omega_2$. By the concavity of $N\mapsto(\det N)^{1/n}$, we have
\[
L_{w_1^R}(w_1^R-\chi)\leq0\quad\text{in }\Omega_1,
\qquad
L_{\chi}(w_1^R-\chi)\geq0\quad\text{in }\Omega_2.
\]
We choose $x_+,x_-\in D_0$ where $v_1-v_2$ attains its maximum and
minimum, respectively, and set $\tilde x_\pm:=R^{-1}T^{-1}x_\pm$.
We still have $\tilde x_\pm\in B_{1/4}$ and
$|\tilde x_+-\tilde x_-|\leq C_{\mathcal K}R^{-1}$, and by
\eqref{eq:weighted-dirichlet-replacement},
\[
\begin{aligned}
\frac{M}{R^2}
&=(w_1^R-w_2^R)(\tilde x_+)-(w_1^R-w_2^R)(\tilde x_-)\\
&\leq |(w_1^R-\chi)(\tilde x_+)-(w_1^R-\chi)(\tilde x_-)|+C\eta.
\end{aligned}
\]
After subtracting the boundary constants from the potentials, we apply
Proposition~\ref{prop:jltt-paired-holder} to $w_1^R-\chi$ with
$\kappa=\kappa_{\Gamma_R}(B_1)\simeq_{\mathcal K}R$.
The preceding estimates then imply, for a fixed $R=R(n,\mathcal K)$
with $\theta_R\leq1/2$, that
\[
M\leq C\eta+\frac12M.
\]
Since $v_1-v_2$ has mean zero, we conclude that
\[
\|v_1-v_2\|_{L^\infty(\mathbb T^n)}
\leq M\leq C\eta
=\frac{C}{\lambda_0}\|\rho_1-\rho_2\|_{L^\infty(d\mu)},
\]
which proves
\eqref{eq:weighted-profile-lipschitz} and completes the proof.
\end{proof}

\subsection{Proof of the measure-valued averaging theorem}
\label{sec:measure-rate-proof}

\begin{proof}[Proof of Theorem~\ref{thm:weighted-measure-holder-rate}]
By the approximation in Section~\ref{sec:measure-approximation}, we may assume that
$d\mu=f\,dy$, with $f$ smooth, positive, and periodic, and that $F$
is smooth in $y$.
After a fixed translation and dilation, we may also assume that
$\overline\Omega\subset B_1(0)$.
By periodicity, the normalization $\mu(\mathbb T^n)=1$, and the bound
$F\leq\Lambda$, we have
\[
\int_\Omega F(x,x/\varepsilon)\,d\mu_\varepsilon(x)\leq C,
\qquad 0<\varepsilon\leq1.
\]
The Alexandrov maximum principle then gives a uniform bound for
$u^\varepsilon$.
It therefore suffices to consider $0<\varepsilon<\varepsilon_0$;
the remaining values of $\varepsilon$ are covered by increasing $C$.

Let us set
\[
A(x):=D^2u(x),
\qquad
\rho_x(y):=\frac{F(x,y)}{\overline F(x)}.
\]
Then $\rho_x\mu$ is a normalized periodic measure for each
$x\in\overline\Omega$, and we have
\begin{equation}
\label{eq:weighted-normalized-profile}
\begin{gathered}
\lambda\leq \overline F\leq\Lambda,
\qquad
\frac{\lambda}{\Lambda}\leq\rho_x\leq
\frac{\Lambda}{\lambda}
\quad\mu\text{-a.e.},\\
|\overline F(x)-\overline F(\tilde x)|
+\|\rho_x-\rho_{\tilde x}\|_{L^\infty(d\mu)}
\leq C|x-\tilde x|^\alpha.
\end{gathered}
\end{equation}
Since $\det A=\overline F\geq\lambda$ and $D^2u$ is bounded,
\eqref{eq:effective-uniform-convexity} holds, so $A(\overline\Omega)$ is
contained in a compact subset of $\mathcal S_+^{n\times n}$ depending
only on the stated data. By Theorem~\ref{thm:measure-corrector-lipschitz},
\eqref{eq:weighted-normalized-profile}, and
\eqref{eq:measure-corrector-size}, we have
\begin{equation}
\label{eq:weighted-modulated-corrector-bounds}
\begin{gathered}
\sup_{x\in\overline\Omega}
\|v_{A(x),\rho_x\mu}\|_{L^\infty(\mathbb T^n)}\leq C,\\
\left\|v_{A_1,\rho_x\mu}-v_{A_2,\rho_{\tilde x}\mu}\right\|_{L^\infty(\mathbb T^n)}
\leq C\left(|A_1-A_2|+|x-\tilde x|^\alpha\right)
\end{gathered}
\end{equation}
for $A_1,A_2$ in a fixed compact neighborhood of $A(\overline\Omega)$.

We write
\[
V_F^\varepsilon(x)
:=u(x)+\varepsilon^2v_{A(x),\rho_x\mu}(x/\varepsilon),
\qquad
E^\varepsilon:=u^\varepsilon-V_F^\varepsilon.
\]
By \eqref{eq:weighted-modulated-corrector-bounds}, $E^\varepsilon$ is
continuous, and since $u^\varepsilon=u$ on $\partial\Omega$, we have
\begin{equation}
\label{eq:weighted-modulated-error-boundary}
|E^\varepsilon|\leq C\varepsilon^2
\qquad\text{on }\partial\Omega.
\end{equation}
Set
\[
M:=\|E^\varepsilon\|_{L^\infty(\Omega)}
=\max_{\overline\Omega}|E^\varepsilon|
=\max_{x\in\overline\Omega}
\left|u(x)-u^\varepsilon(x)
 +\varepsilon^2v_{A(x),\rho_x\mu}(x/\varepsilon)\right|.
\]
Since $\|u^\varepsilon-u\|_{L^\infty(\Omega)}\leq M+C\varepsilon^2$
and $\varepsilon^2\leq\varepsilon^\alpha$, it remains to show that
$M\leq C\varepsilon^\alpha$.
Suppose for contradiction that
\begin{equation}
\label{eq:weighted-error-contradiction}
M>K\varepsilon^\alpha,
\end{equation}
where $K>0$ is a sufficiently large constant to be fixed below.

We first treat the case $M=\max_{\overline\Omega}E^\varepsilon$.
Fix $\kappa\in(0,1/2)$, and let $x_0\in\overline\Omega$ be a maximum
point of
\[
\Phi(x):=E^\varepsilon(x)+\kappa M|x|^2.
\]
We have $\Phi(x_0)\geq M$, while
\eqref{eq:weighted-modulated-error-boundary} and
$\overline\Omega\subset B_1(0)$ give
\[
\Phi\leq C\varepsilon^2+\kappa M
\quad\text{on }\partial\Omega.
\]
Choosing $K$ so large that $C/K\leq(1-\kappa)/2$ and using
$\varepsilon^2\leq\varepsilon^\alpha<M/K$, we obtain
\[
\Phi(x_0)-\Phi(x)
\geq\left(1-\kappa-\frac{C}{K}\right)M
\geq c_0M,
\qquad x\in\partial\Omega,
\]
where $c_0:=(1-\kappa)/2$. In particular, $x_0\in\Omega$.

We set
\[
D:=B_\varepsilon(x_0)\cap\Omega,
\qquad
c_*:=\frac{L}{\lambda},
\qquad
A_-:=(1-c_*\varepsilon^\alpha)A(x_0).
\]
After decreasing $\varepsilon_0$, we may assume that
$c_*\varepsilon^\alpha\leq1/2$ and that the matrices
$(1\pm c_*\varepsilon^\alpha)A(x_0)$ remain in the compact neighborhood used in
\eqref{eq:weighted-modulated-corrector-bounds}.
Let us consider the convex function
\[
\begin{aligned}
Q^\varepsilon(x)
:={}&V_F^\varepsilon(x_0)
  +(Du(x_0)-2\kappa Mx_0)\cdot(x-x_0)\\
 &+\frac12(x-x_0)\cdot A_-(x-x_0)
  +\varepsilon^2\left[
  v_{A_-,\rho_{x_0}\mu}(x/\varepsilon)
  -v_{A_-,\rho_{x_0}\mu}(x_0/\varepsilon)\right].
\end{aligned}
\]
Then $Q^\varepsilon(x_0)=V_F^\varepsilon(x_0)$.

By the definition of $A_-$ and the H\"older continuity of $A$, we have
$\sup_{x\in D}|A_--A(x)|\leq C\varepsilon^\alpha$.
Using this estimate, the Taylor expansion of $u$ at $x_0$, and
\eqref{eq:weighted-modulated-corrector-bounds}, we obtain
\begin{equation}
\label{eq:weighted-frozen-cell-replacement}
\left|Q^\varepsilon(x)-V_F^\varepsilon(x)
 +2\kappa Mx_0\cdot(x-x_0)\right|
\leq C\varepsilon^{2+\alpha},\qquad x\in D.
\end{equation}
For $x\in D$ and $y\in\mathbb T^n$, we also have
\[
F(x,y)
\geq F(x_0,y)-L\varepsilon^\alpha
\geq(1-c_*\varepsilon^\alpha)F(x_0,y)
\geq(1-c_*\varepsilon^\alpha)^nF(x_0,y).
\]
By the cell equation, we then have
\begin{equation}
\label{eq:weighted-frozen-measure-order}
\begin{aligned}
\det D^2Q^\varepsilon
&=(1-c_*\varepsilon^\alpha)^nF(x_0,x/\varepsilon)\,d\mu_\varepsilon(x)\\
&\leq F(x,x/\varepsilon)\,d\mu_\varepsilon(x)\\
&=\det D^2u^\varepsilon
\qquad\text{in }D.
\end{aligned}
\end{equation}

By \eqref{eq:weighted-frozen-cell-replacement} and the definition of $\Phi$,
we have
\[
\begin{aligned}
u^\varepsilon(x)-Q^\varepsilon(x)
&\leq E^\varepsilon(x)+2\kappa Mx_0\cdot(x-x_0)
  +C\varepsilon^{2+\alpha}\\
&=E^\varepsilon(x_0)+\Phi(x)-\Phi(x_0)
  -\kappa M|x-x_0|^2+C\varepsilon^{2+\alpha}.
\end{aligned}
\]
On $\partial B_\varepsilon(x_0)\cap\overline\Omega$, the maximality of
$\Phi(x_0)$ and \eqref{eq:weighted-error-contradiction} give, for
sufficiently large $K$,
\[
u^\varepsilon-Q^\varepsilon
\leq E^\varepsilon(x_0)+C\varepsilon^{2+\alpha}-\kappa M\varepsilon^2
<E^\varepsilon(x_0).
\]
On $\overline B_\varepsilon(x_0)\cap\partial\Omega$, we instead use
$\Phi(x_0)-\Phi(x)\geq c_0M$ to obtain
\[
u^\varepsilon-Q^\varepsilon
\leq E^\varepsilon(x_0)+C\varepsilon^{2+\alpha}-c_0M
<E^\varepsilon(x_0).
\]
Using \eqref{eq:weighted-frozen-measure-order} and these boundary
estimates, and noting that
$(u^\varepsilon-Q^\varepsilon)(x_0)=E^\varepsilon(x_0)$, we obtain a
contradiction to the comparison principle.

For the remaining case $M=-\min_{\overline\Omega}E^\varepsilon$, we can
still choose $x_0\in\Omega$ to maximize
$-E^\varepsilon(x)+\kappa M|x|^2$ over $\overline\Omega$. Set
\[
D:=B_\varepsilon(x_0)\cap\Omega,
\qquad
A_+:=(1+c_*\varepsilon^\alpha)A(x_0).
\]
We then consider the convex function
\[
\begin{aligned}
Q_+^\varepsilon(x)
:={}&V_F^\varepsilon(x_0)
  +(Du(x_0)+2\kappa Mx_0)\cdot(x-x_0)\\
 &+\frac12(x-x_0)\cdot A_+(x-x_0)
  +\varepsilon^2\left[
  v_{A_+,\rho_{x_0}\mu}(x/\varepsilon)
  -v_{A_+,\rho_{x_0}\mu}(x_0/\varepsilon)\right].
\end{aligned}
\]
We have $Q_+^\varepsilon(x_0)=V_F^\varepsilon(x_0)$ and, by the cell
equation and the H\"older bound on $F$,
\[
\det D^2Q_+^\varepsilon
=(1+c_*\varepsilon^\alpha)^nF(x_0,x/\varepsilon)\,d\mu_\varepsilon(x)
\geq\det D^2u^\varepsilon
\quad\text{in }D.
\]
Repeating the preceding estimates for $Q_+^\varepsilon-u^\varepsilon$,
we obtain
\[
\sup_{\partial D}(Q_+^\varepsilon-u^\varepsilon)
<-E^\varepsilon(x_0)
=(Q_+^\varepsilon-u^\varepsilon)(x_0),
\]
again contradicting the comparison principle.
Thus $M\leq K\varepsilon^\alpha$, and
\[
\|u^\varepsilon-u\|_{L^\infty(\Omega)}
\leq M+C\varepsilon^2
\leq C\varepsilon^\alpha,
\]
which completes the proof.
\end{proof}

\bigskip

\noindent \textbf{Acknowledgement.} Part of this work was carried out while the second and third named authors were visiting the Hong Kong University of Science and Technology (HKUST) for the IAS Focus Program ``Recent Advances on Elliptic and Parabolic Equations" in May 2025. They would like to thank HKUST for its warm hospitality. 

\bigskip

\noindent \textbf{AI usage statement.} We originally proved Theorem \ref{thm:main-result} in 2025. After completing our work \cite{jin2025liouville} in the general measure setting, we sought to extend Theorem \ref{thm:main-result} to the measure setting as well. We recognized that our approach required the Lipschitz estimate established in Theorem \ref{thm:measure-corrector-lipschitz}, but at the time we did not know how to prove it. Following the release of GPT-5.6 Sol, we used this model, through extensive iterative interactions, to develop a proof of Theorem \ref{thm:measure-corrector-lipschitz}. ChatGPT was also used for language editing and organization of the exposition.  The authors take full responsibility for all mathematical statements, arguments, citations, and conclusions in this paper.

\bigskip

\noindent T. Jin

\noindent Department of Mathematics, The Hong Kong University of Science and Technology\\
Clear Water Bay, Kowloon, Hong Kong\\
Email: \textsf{tianlingjin@ust.hk}

\bigskip

\noindent Y.Y. Li

\noindent Department of Mathematics, Rutgers University\\ 
110 Frelinghuysen Road, Piscataway, NJ 08854-8019, USA\\
Email: \textsf{yyli@math.rutgers.edu}

\bigskip

\noindent H.V. Tran

\noindent Department of Mathematics, University of Wisconsin at Madison,\\
Van Vleck Hall, 480 Lincoln Drive, Madison, WI 53706, USA\\
Email: \textsf{hung@math.wisc.edu}

\bigskip

\noindent X. Tu

\noindent Department of Mathematics, The Hong Kong University of Science and Technology\\
Clear Water Bay, Kowloon, Hong Kong\\[1mm]
Email:  \textsf{maxstu@ust.hk}; \textsf{tuxushan28561m@gmail.com}

\end{document}